\newtheorem{theorem}{Theorem}[section]
\newtheorem{lemma}{Lemma}[section]
\newtheorem{definition}{Definition}[section]
\newtheorem{remark}{Remark}
\newcommand{\pa}{\partial}
\newcommand{\f}{\frac}
\newcommand{\abs}[1]{\lvert#1\rvert}
\begin{document}


\title{Matched Interface and Boundary Method for Elasticity Interface Problems }

\author{Bao Wang$^{1}$, Kelin Xia$^{1}$ and Guo-Wei Wei$^{1,2}$
\footnote{
Corresponding author. Email: wei@math.msu.edu}  \\
\\
\small \it     $^1$Department of Mathematics, Michigan State University, East Lansing, MI 48824, USA \\
\small \it     $^2$Department of Electrical and Computer Engineering, \\
\small \it          Michigan State University, East Lansing, MI 48824, USA  }

\date{\today}

\maketitle

\begin{abstract}
Elasticity theory is an important component of continuum mechanics and has had widely spread  applications in science and engineering. Material interfaces are ubiquity in nature and man-made devices, and often give rise to discontinuous coefficients in the governing elasticity equations. In this work, the matched interface and boundary (MIB) method is developed to address  elasticity interface problems. Linear elasticity theory for both isotropic homogeneous and inhomogeneous media is employed.  In our approach, Lam$\acute{e}$'s parameters can have jumps across the interface  and are allowed to be position dependent in modeling isotropic inhomogeneous material. Both strong discontinuity, i.e., discontinuous solution, and weak discontinuity, namely, discontinuous derivatives of the solution, are considered in the present study. In the proposed method, fictitious values  are utilized so that the standard central finite different schemes can be employed  regardless of the interface. Interface jump conditions are enforced on the interface, which in turn, accurately determines fictitious values.  We design new MIB schemes to account for complex interface geometries.  In particular, the cross derivatives in the elasticity equations are difficult to handle for complex interface geometries. We propose secondary fictitious values and construct  geometry based interpolation schemes to overcome this difficulty. Numerous analytical examples are used to validate the accuracy, convergence and robustness of the present  MIB method for elasticity interface problems with both small and large curvatures, strong and weak discontinuities, and constant and variable coefficients. Numerical tests indicate second order accuracy in both $L_\infty$ and $L_2$ norms.
\end{abstract}
{\it Keywords:}~
Elasticity equations;
Elasticity interface problems;
Spatial-dependent shear modulus;
Matched interface and boundary.



\section{Introduction}
Elasticity interface problems play significant roles in continuum mechanics in which elasticity theory and related governing partial differential equations (PDEs) are commonly employed to describe various material behaviors.  For this class of problems, an interface description in the elasticity theory is  indispensable whenever there are voids, pores, inclusions, dislocations, cracks or composite structures in materials \cite{Dvorak:2013,Fries:2010,Sukumar:2001,Stolarska:2001}. Elasticity interface problems are particularly important in tissue engineering, biomedical science and biophysics \cite{Wei:2009,Wei:2013,KLXia:2013d}.  In many situations, the interface is not static such as  fluid-structure interfacial boundaries \cite{WangXS:2009}. Discontinuities in material properties often occur over the interface. Mathematically, there are  two types of discontinuities, namely, strong discontinuities and weak discontinuities. Strong discontinuities are referred to situations where the displacement has jumps across the interface. In contrast, weak discontinuities are concerned  with jumps in the gradient of the displacement, whereas the displacement is still continuous. In the linear elasticity, the stress-strain relation is governed by the constitutive equation. For isotropic homogeneous material, constitutive equations can be determined with any two terms of  bulk modulus, Young's modulus, Lam$\acute{e}$'s first parameter, shear modulus, Poisson's ratio, and P-wave modulus \cite{Anandarajah:2010}. If these moduli are position dependent functions, related constitutive equations can be used to describe elasticity property of isotropic inhomogeneous media. In seismic wave equations, inhomogeneity is accounted by assuming Lam$\acute{e}$'s parameters to be a position dependent function \cite{Shearer:1999}. This model is also used in the elasticity   analysis of  biomolecules \cite{Wei:2009,Wei:2013,KLXia:2013d}.

The study of the analytical solution for elasticity interface problems dated back to Eshelby in 1950s \cite{Eshelby:1956,Eshelby:1957}.  Working on inclusion and inhomogeneity problems, Eshelby found that for an infinite and elastically isotropic system with an ellipsoidal inhomogeneity, the eigenstrain distribution is uniform inside the inhomogeneity when it is subjected to a uniformly applied stress \cite{Eshelby:1956,Eshelby:1957}. Much progress has been made on this area in the past few decades. Recently, semianalytic approaches for finding stress tensors have been proposed for arbitrarily shaped inhomogeneity \cite{Mathiesen:2008}.

Computationally, elasticity interface problems are more difficult than the corresponding  Poisson  interface problems  because of the vector equation and cross derivatives. However, many numerical methods have been designed for elasticity interface problems. Based on meshes used, these methods can be classified  into two types, i.e., algorithms relied on body-fitting meshes and algorithms based on special interface schemes. For the first type, meshes are generated to fit to the geometry of the interface without  cutting through the interface. Therefore, adaptive meshes with local refinement techniques are frequently employed \cite{XuZL:2003}. In the second type of algorithms, meshes are allowed to cut through the interface and particular schemes are designed to incorporate the interface information into the element shape function or discretization scheme. Immersed interface method (IIM) \cite{LeVeque:1994} has been used to solve  elasticity interface problems for isotropic homogenous media \cite{YangXZ:2003,YGong:2010}. In this finite difference based algorithm, a local optimization scheme is designed for irregular grid points and the finial linear equation with a non-symmetric matrix is solved by special solvers like BICG and GRMES. Second order accuracy is obtained \cite{YangXZ:2003}.  A second-order sharp numerical method  has been developed for linear elasticity equations \cite{Theillard:2013}. Finite element based methods are also proposed for elasticity interface problems. Among them, the partition of unity method (PUM), the generalized finite element method (GFEM) and extended finite element method (XFEM) are developed to capture the non-smooth property of the solution over the interface   by adding  enrichment functions to the approximation \cite{Sukumar:2001,Stolarska:2001,Fries:2010}. Through the weak enforcement of the continuity, discontinuous Galerkin based methods have been employed to simulate strong and weak discontinuities \cite{Hansbo:2002,Becker:2009,Mergheim:2006}. Recently,  immerse finite element (IFM) method has been proposed to solve elasticity problems with inhomogeneous jump conditions \cite{LiZL:2005,XieH:2011,YChang:2012}. In this approach, finite element basis functions are adjusted locally to satisfy the jump conditions across the interface. Sharp-edged interface is considered for a special elasticity interface problem \cite{HouSM:2012}.  Lin, Sheen and Zhang have proposed a bilinear IFM and further modified it to a locking-free version \cite{LinT:2012,LinT:2013}.  For both compressible and nearly incompressible media, this method works well and offers second order accuracy. Recently, immersed meshfree Galerkin method has also been proposed for composite solids \cite{CTWu:2013}. Most recently, a Nitsche type method has been proposed for elasticity interface problems \cite{Michaeli:2013}.  Given the importance of elasticity interface problems in science and engineering, it is expected that more efficient numerical methods will be developed for this class of problems in the near future.

The matched interface and boundary (MIB) method was originally developed for solving Maxwell's equations \cite{Zhao:2004} and elliptic interface problems  \cite{Yu:2007,Yu:2007a,Zhou:2006c,Zhou:2006d,Geng:2007a}. A unique feature of the MIB method is that it provides a systematic procedure to achieve arbitrarily high order convergence for simple interfaces  \cite{Zhao:2004,Zhou:2006c} and second order accuracy for arbitrarily complex interface geometry \cite{Yu:2007,Yu:2007a}. The essential idea is to introduce fictitious values at irregular mesh points which form fictitious domains \cite{JCPWei:1999} so that  standard finite difference schemes can   still be used across the interface.  The lowest order interface jump conditions are iteratively enforced at the interface which determines fictitious values on fictitious domains.   Typically, whenever possible, a high-dimensional  interface problem is split into  simple one-dimensional (1D) interface problems, similar to our earlier discrete singular convolution algorithm \cite{JCPWei:1999}. Due to the great flexibility in the construction of   fictitious approximations, the MIB method has been shown to  deliver up to 16th order accuracy for simple interfaces \cite{Zhao:2004,Zhou:2006c} and  robust second order accuracy for arbitrarily complex interface geometry with geometric singularities (i.e., non-smooth interfaces with Lipschitz continuity) \cite{Yu:2007,Yu:2007a} and singular sources \cite{Geng:2007a}.  In the past decades, MIB method has been applied to a variety of problems. In computational biophysics, an MIB based Poisson-Boltzmann solver, MIBPB \cite{DuanChen:2011a}, has been constructed   for the analysis of the electrostatic potential of biomolecules \cite{Yu:2007, Geng:2007a,Zhou:2008b}, molecular dynamics \cite{Geng:2011} and charge transport phenomenon \cite{QZheng:2011a, QZheng:2011b}.  Zhao has developed robust MIB schemes for the Helmholtz problems \cite{SZhao:2010a,SZhao:2008a}. A second order accurate MIB method is constructed by Zhou and coworkers to solve the Navier-Stokes equations with discontinuous viscosity and density \cite{YCZhou:2012a}. Recently, the MIB method has been used to solve elliptic equations with multi-material interfaces \cite{KLXia:2011}.


The objective of the present paper is to introduce the MIB method for solving  elasticity interface problems. We consider both strong and weak discontinuities for isotropic homogeneous and inhomogeneous media. Computationally, the cross derivative terms in the elasticity model  give rise to  a new  challenge for the MIB method when the interface geometry is  complex. To overcome this difficulty, we modify the tradition fictitious definition and redefine   fictitious values. With the MIB dimension splitting technique, a new fictitious representation is generated for each irregular mesh point based on   elastic jump conditions and local geometry. Secondary fictitious values are constructed by the interpolation of these fictitious values and function values. We have designed schemes to deal with both small curvature and large curvature for complex interface geometries. To validate our method, analytical tests for different types of discontinuities and interface geometries are constructed. We demonstrate the second order accuracy of our MIB schemes for elasticity interface problems.

The rest of this paper is organized as follows.  The basic setting of  elasticity interface problems is presented in Section \ref{theory}. The linear elasticity equations, interface jump conditions and constitutive laws are discussed in detail to facilitate further consideration.
 Section \ref{algorithm} is devoted to the construction of MIB algorithms. General fictitious schemes are proposed for elasticity interface problems. Secondary fictitious values are introduced for cross derivative terms. Our method is extensively validated by   analytical tests with complex interface geometries in Section \ref{validation}.  This paper ends with a conclusion.


\section{Formulation of the elasticity interface problem}\label{theory}

In this section, the elasticity problem with material interfaces is formulated. First, the governing equations of the linear elasticity interface problem are derived. Then the weak solution to the governing equation is defined.  Based on the weak solution, the interface conditions are derived for the linear elasticity interface problem. Finally, the Dirichlet boundary condition is employed to make the linear elasticity interface problem    computationally well-posed.

\subsection{Linear elasticity equations}

When solid objects are subjected to external or internal loads, they deformed and lead to stress. If the deformation of the solid is relatively small,   linear relationships between the components of stress and strain are maintained. Consequently, linear elasticity theory is valid. In practice, linear elasticity theory is applicable to a wide range of  natural and engineering materials, and thus  extensively used in structural analysis and engineering design.

Figure \ref{illus_elas} illustrates the displacement in a two-dimensional (2D) elastic motion.  The displacement under an infinitesimal perturbation in position $\delta {\bf x}$ can be approximated by the linear term
\begin{equation}
\label{linear_app}
u_i(\mathbf{x+\delta x})\approx u_i(\mathbf{x})+\sum^2_{j=1} \frac{\partial u_i(\mathbf{x})}{\partial x_j} \delta x_j :=u_i(\mathbf{x})+\delta u_i,\ \  i=1, 2,
\end{equation}
where $\mathbf{x}=(x_1, x_2)$ is the position of a point of the un-deformed elastic body, $u_i$ is the $i$th component of the displacement vector and $\delta u_i$  is the relative displacement
\begin{equation}
\label{dis_app}
\delta u_i=\sum^2_{j=1} \frac{\partial u_i(\mathbf{x})}{\partial x_j} \delta x_j, \ \ i=1, 2.
\end{equation}

\begin{figure}[!ht]
\small
\centering
\includegraphics[width=6cm,height=5cm]{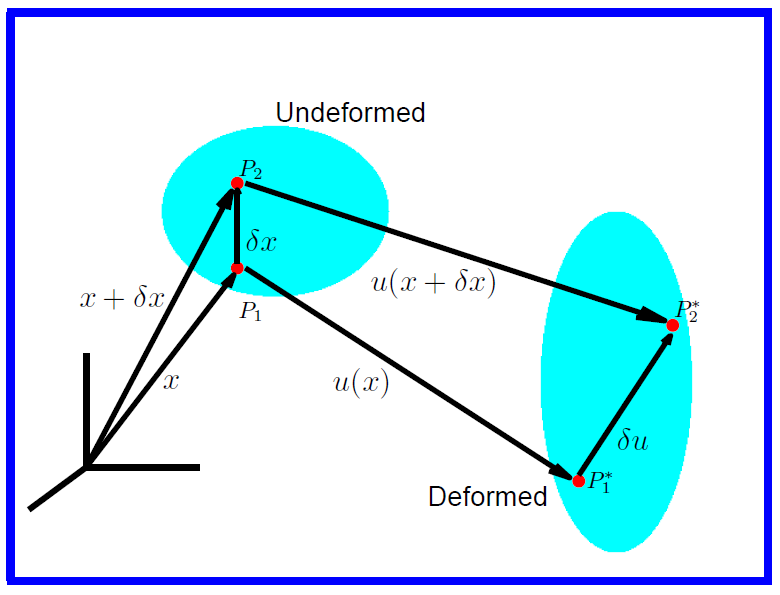}
\caption{Deformation in the continuum where two points $P_1$ and $P_2$ are deformed to $P_1^*$ and $P_2^*$ respectively.}
\label{illus_elas}
\end{figure}

The Cauchy's infinitesimal strain tensor, or the strain tensor $\sigma$ for simplicity, is defined as
\begin{equation}
\label{CST}
\sigma_{ij}=\frac{1}{2}\left(\frac{\partial u_j}{\partial x_i}+\frac{\partial u_i}{\partial x_j}\right), \ \ i, j=1, 2,
\end{equation}
where $\sigma_{ij}$ is the $ij-$th element of  $\sigma$.
In a compact notation,  the linear strain tensor is given by
\begin{equation}
\sigma=\frac{1}{2}\left(\nabla \mathbf{u}+(\nabla \mathbf{u})^T\right),
\end{equation}
where $\mathbf{u}=\left(u_1(x_1, x_2), u_2(x_1, x_2)\right)^T$ is the position of the point $(x_1, x_2)$ of the deformed elastic body.
Obviously, the strain tensor describes the total displacement. Physically, the Hooke's law states that the strain must lead to stress. Mathematically,  the constitutive equation between strain and stress tensors is given by
\begin{equation}
\label{const_eq}
T_{ij}=c_{ijkl}\sigma_{kl},
\end{equation}
where $T_{ij}$ is the $ij-$th element of  stress tensor $\mathbb{T}$ and $c_{ijkl}$ is  the $ijkl-$th element of   elastic moduli or  stiffness tensor $\mathbb{C}$,  which is a fourth  order tensor describing properties of the material. In the constitutive equation, the stress tensor is expressed through the contraction between the strain tensor and the stiffness tensor.

For isotropic homogeneous media, there is no  preferred direction in the stiffness tensor. Therefore, the stress-strain relation can be dramatically simplified.  We utilize  Lam\'{e}'s  parameter $\lambda$ and  shear modulus  $\mu$ to simplify the constitutive equation as
\begin{equation}
\label{con_2}
T_{ij}=\lambda {\rm tr}(\sigma)\delta_{ij}+2\mu \sigma_{ij},
\end{equation}
where $\delta_{ij}$ is the kronecker function, ${\rm tr}(\sigma)$ is the trace of the strain tensor. In a compact notation,  the stress tensor is
\begin{equation}
\mathbb{T}=\lambda {\rm tr}(\sigma)I+2\mu \sigma,
\end{equation}
where $I$ is the identity tensor.

In practical applications, one is often interested in the description of elasticity motion. By the Newton's second law, the motion of elasticity body is governed by
\begin{equation}\label{GEq1}
\nabla\cdot \mathbb{T}+\mathbf{F}=\frac{\partial^2\mathbf{u}}{\partial t^2},
\end{equation}
where $\mathbf{F}=(F_1, F_2)^T$ is the external force on the elastic body.  This equation can be more rigorously derived from the variation principle \cite{Wei:2009}.
The static state of the elastic motion is then governed by:
\begin{equation}\label{elas_inhom}
\nabla\cdot \mathbb{T}+\mathbf{F}=\mathbf{0}.
\end{equation}
In many applications, Eq. (\ref{elas_inhom}) is solved to obtain the deformation under a given force.

\subsection{Interface jump conditions}

Consider the static state of two-phase elastic body motions in domain $\Omega\subset \mathbb{R}^2$. Let us suppose that the two-phase elastic motions  are separated by interface $\Gamma$, which separates the whole domain $\Omega$ into two sub-domains $\Omega^+$ and $\Omega^-$,
i.e., $\Omega=\Omega^+ \cup \Omega^- \cup \Gamma$, as shown in Figure \ref{inter_pro}.

\begin{figure}[!ht]
\small
\centering
\includegraphics[width=5cm,height=5cm]{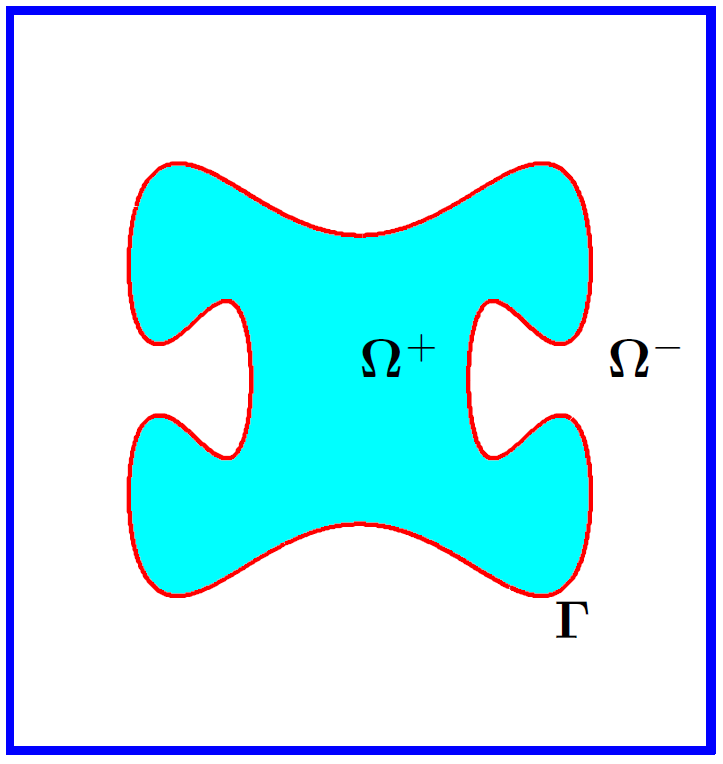}
\caption{Illustration of a  two-phase elasticity interface problem. The interface $\Gamma$ separates the whole computational domain into two parts $\Omega^+$ and $\Omega^-$.}
\label{inter_pro}
\end{figure}

\subsubsection{Weak solution of homogeneous elasticity equations}

In this section, we define the weak solution to   homogeneous linear elasticity equations without the body force
\begin{equation}
\label{hom_elas}
\nabla\cdot \mathbb{T}=\mathbf{0}.
\end{equation}
For simplicity, we   temporally consider the first equation in homogeneous elasticity equations (\ref{hom_elas}), i.e.,
$$
\nabla\cdot \mathbb{T}(:, 1)=0,
$$
where $\mathbb{T}(:, 1)$ denotes the first column of the matrix of the stress tensor $\mathbb{T}$, and let $\mathbb{\tilde{T}}:=\mathbb{T}(:, 1)$,
thus the first component of the homogeneous elasticity equation can be written as
\begin{equation}
\label{1_com_hom}
\nabla\cdot \mathbb{\tilde{T}}=0.
\end{equation}

To define the weak solution to the homogeneous elasticity equations, we choose the test function space to be $C^\infty_0(\Omega)$.
Multiplying Eq. (\ref{1_com_hom}) by $\phi\in C^\infty_0(\Omega)$ and integrating the obtained equation over the whole domain
$\Omega$ yield
$$
\int_\Omega \phi\nabla \cdot\tilde{\mathbb{T}}dV=0.
$$
Applying the integration by parts to the above equation, and note that $\phi$ is of compact support, one has
\begin{equation}
\label{weak1}
\int_\Omega \nabla\phi\cdot \tilde{\mathbb{T}}dV=0.
\end{equation}
Similarly,  a form  similar to Eq. (\ref{weak1})  can be derived for the other equation in Eq. (\ref{hom_elas}).

\begin{definition}
\textbf{Weak Solution}
$\tilde{\mathbb{T}}$ is said to be the weak solution of the equation $\nabla\cdot \tilde{\mathbb{T}}=0$ provided equation (\ref{weak1}) holds for
all $\phi\in C^\infty_0(\Omega)$.
\end{definition}

\subsubsection{Interface jump conditions}
In this part, interface jump conditions are formulated for  linear elasticity equation  (\ref{elas_inhom}).

\begin{lemma}
For 2D inhomogeneous linear elasticity equation (\ref{elas_inhom}), if the force term $\mathbf{F}$ has a potential function $U$, i.e.,
$\nabla U=\mathbf{F}$, then it can be expressed in a conservative form, i.e., homogeneous form.
\end{lemma}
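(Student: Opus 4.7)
The plan is to absorb the body force into a modified stress tensor by exploiting the fact that a gradient can be rewritten as the divergence of a scalar multiple of the identity. Starting from the governing equation $\nabla\cdot\mathbb{T}+\mathbf{F}=\mathbf{0}$ together with the hypothesis $\mathbf{F}=\nabla U$, I would look for a tensor $\tilde{\mathbb{T}}$ such that $\nabla\cdot\tilde{\mathbb{T}}=\nabla\cdot\mathbb{T}+\mathbf{F}$, since this combination is identically zero on any solution.

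The key identity to record is that, writing $I$ for the $2\times 2$ identity tensor, one has
\begin{equation*}
\bigl(\nabla\cdot(U I)\bigr)_i \;=\; \sum_{j=1}^{2}\frac{\partial}{\partial x_j}\bigl(U\,\delta_{ij}\bigr)\;=\;\frac{\partial U}{\partial x_i},\qquad i=1,2,
\end{equation*}
so that $\nabla\cdot(U I)=\nabla U=\mathbf{F}$. Setting $\tilde{\mathbb{T}}:=\mathbb{T}+U I$ and using linearity of the divergence then gives $\nabla\cdot\tilde{\mathbb{T}}=\nabla\cdot\mathbb{T}+\nabla U = -\mathbf{F}+\mathbf{F}=\mathbf{0}$, which is exactly the homogeneous (conservative) form in the sense of Eq.~(\ref{hom_elas}).

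The last small step is simply to note that $\tilde{\mathbb{T}}$ is still a bona fide symmetric second-order tensor (since both $\mathbb{T}$ and $U I$ are symmetric), so the weak-solution definition and the forthcoming interface jump conditions derived for $\mathbb{T}$ can be applied verbatim to $\tilde{\mathbb{T}}$. There is no genuine obstacle in the argument; it is a one-line algebraic rearrangement. The only thing to be careful about is the sign and placement of $\mathbf{F}$, and the fact that the identity $\nabla U=\nabla\cdot(UI)$ holds componentwise in any dimension, so the same construction would also be available in 3D if needed.
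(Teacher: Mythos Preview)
Your argument is correct and is essentially the same as the paper's: the paper works componentwise, adding $U$ to the diagonal entry $T_{11}$ (and implicitly $T_{22}$) to absorb the force term, which is exactly your global construction $\tilde{\mathbb{T}}=\mathbb{T}+UI$ written in coordinate-free form. Your presentation is in fact cleaner, since the identity $\nabla\cdot(UI)=\nabla U$ makes the whole lemma a one-line computation rather than a component-by-component check.
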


\begin{proof}
Without loss of generality, we only prove the statement for the first equation in    equation (\ref{elas_inhom}), which is:
\begin{equation}
\label{inhom_2}
\nabla \cdot \mathbb{T}(:, 1)+U_x=0,
\end{equation}
where $\mathbb{T}(:, 1)$ is the first column of the tonsorial  matrix $\mathbb{T}$.

For convenient, we denote $\mathbb{T}(:, 1):=\left(T_{11}, T_{21}\right)^T$, then   equation (\ref{inhom_2})
becomes:
$$
\frac{\partial (T_{11}+U)}{\partial x}+\frac{\partial T_{21}}{\partial y}=0,
$$
thus, there exists another 2 by 2 tensor $\hat{\mathbb{T}}$, where $\hat{\mathbb{T}}-\mathbb{T}$ only depends on potential function $U$, such that
\begin{equation}
\label{inhom_3}
\nabla\cdot \hat{\mathbb{T}}=0.
\end{equation}
Furthermore, Eq. (\ref{elas_inhom}) is equivalent to Eq. (\ref{inhom_3}).
\end{proof}

\begin{theorem}
For  2D linear elasticity equations (\ref{elas_inhom}), if the source term $\mathbf{F}$ has a potential function $U$, i.e.,
$\nabla U=\mathbf{F}$, then across the interface, the weak solution should satisfy following interface conditions
\begin{equation}
\label{normal_inter}
[\mathbb{T}\cdot \mathbf{n}]=\mathbf{T},
\end{equation}
where $\mathbf{T}$ is a vector-valued function, $[*]$ is the difference of quantity ``*" across the interface and
$\mathbf{n}$ is the normal direction of the interface.
\end{theorem}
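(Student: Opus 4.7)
The plan is to combine the conservative reformulation supplied by the preceding Lemma with the weak-solution definition and then apply the divergence theorem separately on $\Omega^+$ and $\Omega^-$ to collect a surface integral on $\Gamma$.

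First, I would invoke the Lemma to absorb the potential $U$ and rewrite (\ref{elas_inhom}) in the equivalent homogeneous form $\nabla\cdot \hat{\mathbb{T}} = \mathbf{0}$, where $\hat{\mathbb{T}}-\mathbb{T}$ depends only on $U$. This moves the problem into the setting where the weak-solution definition of the previous subsection applies. I would then work column by column, concentrating on $\hat{\tilde{\mathbb{T}}}:=\hat{\mathbb{T}}(:,1)$ (the argument for the second column is identical).

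Second, I would take an arbitrary test function $\phi\in C^\infty_0(\Omega)$ and start from the global weak identity
\begin{equation*}
\int_\Omega \nabla\phi\cdot \hat{\tilde{\mathbb{T}}}\, dV = 0,
\end{equation*}
which holds by the weak-solution definition. Splitting the integral as $\int_{\Omega^+}+\int_{\Omega^-}$ (the interface $\Gamma$ is of measure zero) and integrating by parts on each subdomain, the volume terms vanish because the strong form $\nabla\cdot \hat{\tilde{\mathbb{T}}}=0$ holds classically inside each subdomain, and the contributions on $\partial\Omega$ vanish because $\phi$ has compact support. What remains are the two surface integrals on $\Gamma$ whose outward normals are opposite; choosing $\mathbf{n}$ to point from $\Omega^-$ into $\Omega^+$ they combine into
\begin{equation*}
\int_\Gamma \phi\, [\hat{\tilde{\mathbb{T}}}\cdot\mathbf{n}]\, dS = 0.
\end{equation*}
The fundamental lemma of the calculus of variations, applied to arbitrary $\phi\in C^\infty_0(\Omega)$, then forces the integrand to vanish pointwise on $\Gamma$. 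Repeating the same derivation for the second column and unwinding the conversion between $\hat{\mathbb{T}}$ and $\mathbb{T}$ from the Lemma yields the desired tensorial identity $[\mathbb{T}\cdot \mathbf{n}] = \mathbf{T}$, where $\mathbf{T}$ accumulates both the contribution coming from the potential $U$ across $\Gamma$ and any prescribed surface-traction data of the interface problem.

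\textbf{Main obstacle.} The routine steps are the divergence theorem and the density argument; the delicate bookkeeping is the appearance of the nonzero right-hand side $\mathbf{T}$. A naive weak computation gives $[\hat{\mathbb{T}}\cdot\mathbf{n}]=\mathbf{0}$, so $\mathbf{T}$ must be produced either by allowing $U$ (and hence $\hat{\mathbb{T}}-\mathbb{T}$) to jump across $\Gamma$, or by interpreting prescribed interface tractions as source data concentrated on $\Gamma$. Keeping this distinction consistent across both components of the vector equation, and being careful about the orientation conventions of $\mathbf{n}$ on the two sides of $\Gamma$, is where the care has to be taken.
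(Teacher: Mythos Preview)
Your proposal is correct and follows essentially the same route as the paper: invoke the Lemma to pass to the conservative form $\nabla\cdot\hat{\mathbb{T}}=\mathbf{0}$, apply the weak identity column by column, split over $\Omega^\pm$ and integrate by parts to obtain $[\hat{\mathbb{T}}\cdot\mathbf{n}]=\mathbf{0}$ on $\Gamma$, and then recover $[\mathbb{T}\cdot\mathbf{n}]=[(\mathbb{T}-\hat{\mathbb{T}})\cdot\mathbf{n}]=:\mathbf{T}$ from the jump of the $U$-dependent correction. Your identification of the ``main obstacle'' --- that $\mathbf{T}$ arises precisely from the jump of $U$ across $\Gamma$ rather than from the weak computation itself --- is exactly the final line of the paper's argument.
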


\begin{proof}
By the above lemma, there exists another second order tensor $\hat{\mathbb{T}}$, such that $\nabla \cdot \hat{\mathbb{T}}=\mathbf{0}$.
Without loss of generality, we only prove the interface condition for the first equation in  equations $\nabla \cdot \hat{\mathbb{T}}=\mathbf{0},$ and denote it as $\nabla \cdot \hat{\mathbb{T}}_1=0.$

Since $\Omega=\Omega^+ \cup \Gamma \cup \Omega^-$, and the interface is of measure zero, hence $\forall \phi \in C_0^\infty(\Omega)$, the following equation holds
\begin{equation}
\label{inter}
0=\int_\Omega \nabla\phi\cdot \hat{\mathbb{T}}_1 dV=\int_{\Omega^+} \nabla\phi\cdot \hat{\mathbb{T}}_1 dV+\int_{\Omega^-} \nabla\phi\cdot \hat{\mathbb{T}}_1 dV.
\end{equation}
Integrating by part applied to the first term in the right hand side of Eq. (\ref{inter}) yields
\begin{eqnarray*}
\int_{\Omega^+}\nabla\phi\cdot\hat{\mathbb{T}}_1dV
&=&\int_{\Omega^+}\nabla\cdot(\phi\hat{\mathbb{T}}_1)dV-\int_{\Omega^+}\phi \nabla\cdot \hat{\mathbb{T}}_1dV\\
&=&\int_{\Gamma}\phi\hat{\mathbb{T}}_1\cdot \mathbf{n}dS-\int_{\Omega^+}\phi \nabla\cdot \hat{\mathbb{T}}_1dV.
\end{eqnarray*}

In  region $\Omega^+$, note $\nabla \cdot \hat{\mathbb{T}}_1=0$, therefore:
\begin{equation}
\int_{\Omega^+}\nabla\phi\cdot\hat{\mathbb{T}}_1dV=\int_{\Gamma}\phi\hat{\mathbb{T}}_1^+\cdot \mathbf{n}dS,
\end{equation}
where $\hat{\mathbb{T}}_1^+$ means that the value was evaluated by taking the limit from the region $\Omega^+$.

Similarly, we have
$$
\int_{\Omega^-}\nabla\mathbf{\phi}\cdot \hat{\mathbb{T}}_1dV=-\int_{\Gamma}\mathbf{\phi}\hat{\mathbb{T}}^-_1\cdot \mathbf{n}dS,
$$
where the minus sign occurs because the outward normal $\mathbf{n}$ for $\Omega^+$ is the inward normal for $\Omega^-$.
Therefore, we have
$$
\int_{\Gamma}\phi\hat{\mathbb{T}}_1^+\cdot \mathbf{n}dS=\int_{\Gamma}\phi\hat{\mathbb{T}}_1^-\cdot \mathbf{n}dS,
$$
i.e.,
$$
\int_{\Gamma}\phi(\hat{\mathbb{T}}_1^+-\hat{\mathbb{T}}_1^-)\cdot \mathbf{n}dS=0.
$$
Since test function $\phi$ is arbitrary, we have $\hat{[\mathbb{T}}_1\cdot \mathbf{n}]=0$. In the same manner we can show that
$[\hat{\mathbb{T}}_2\cdot \mathbf{n}]=0$, where $\hat{\mathbb{T}}_2=\mathbb{T}(:,2)$. Therefore we have  $\nabla \cdot \hat{\mathbb{T}}=\mathbf{0}$.
According to the above lemma, we also have:
$$
[\mathbb{T}\cdot \mathbf{n}]=[(\mathbb{\hat{T}}+\mathbb{T}-\mathbb{\hat{T}})\cdot \mathbf{n}]=[(\mathbb{T}-\mathbb{\hat{T}})\cdot \mathbf{n}]:=\mathbf{T}.
$$
\end{proof}

Similarly, we can prove the following result.
\begin{theorem}
 Let $\mathbb{T}$ be a $2$nd order tensor in $ \mathbb{R}^n$, for  elasticity equations
\begin{equation}
\nabla \cdot \mathbb{T}+\mathbf{F}=\mathbf{0},
\end{equation}
where $\mathbf{F}$ is an $n$-dimensional vector-valued function, and $\mathbf{0}\in \mathbb{R}^n$.

If the force term has a potential function $U$, i.e., $\nabla U=\mathbf{F}$, then across the interface, the weak solution should satisfy the
following interface conditions
\begin{equation}
\label{inter2}
[\mathbb{T}\cdot \mathbf{n}]=\mathbf{T},
\end{equation}
where $\mathbf{T}$ is an $n$-dimensional vector-valued function and  $\mathbf{n}$ is the normal direction of the interface.
\end{theorem}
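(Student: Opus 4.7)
The plan is to lift the two-dimensional argument of the previous theorem verbatim to $\mathbb{R}^n$, using the same two-step strategy: first recast the equation in conservative form via an $n$-dimensional version of the preceding lemma, then extract the jump relation column-by-column from the weak formulation.

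For the first step, since $\mathbf{F}=\nabla U$ the $i$th component of $\nabla\cdot\mathbb{T}+\mathbf{F}=\mathbf{0}$ reads $\sum_{j=1}^n \partial_j T_{ji}+\partial_i U=0$, which rewrites as $\sum_{j=1}^n \partial_j \hat{T}_{ji}=0$ with $\hat{T}_{ji}:=T_{ji}+U\,\delta_{ji}$. Equivalently, $\hat{\mathbb{T}}:=\mathbb{T}+U\,I$ is divergence-free in $\Omega^\pm$, and the correction $\hat{\mathbb{T}}-\mathbb{T}$ is a scalar multiple of the identity depending only on the potential $U$, precisely as in the $n=2$ lemma. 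For the second step, fix a column index $i$, set $\tilde{\mathbb{T}}_i:=\hat{\mathbb{T}}(:,i)$, take $\phi\in C_0^\infty(\Omega)$, and start from $\int_\Omega \phi\,\nabla\cdot\tilde{\mathbb{T}}_i\,dV=0$. Integration by parts together with the splitting $\Omega=\Omega^+\cup\Gamma\cup\Omega^-$ gives
$$
\int_{\Omega^+}\nabla\phi\cdot\tilde{\mathbb{T}}_i\,dV+\int_{\Omega^-}\nabla\phi\cdot\tilde{\mathbb{T}}_i\,dV=0.
$$
Applying the divergence theorem on each sub-domain, noting that the $\partial\Omega$-contribution vanishes by compact support of $\phi$ and that $\nabla\cdot\tilde{\mathbb{T}}_i=0$ on each side, the surface integrals on $\Gamma$ carry opposite orientations and combine into
$$
\int_\Gamma \phi\,\bigl(\tilde{\mathbb{T}}_i^+-\tilde{\mathbb{T}}_i^-\bigr)\cdot\mathbf{n}\,dS=0.
$$
Arbitrariness of $\phi$ then forces $[\hat{\mathbb{T}}(:,i)\cdot\mathbf{n}]=0$ on $\Gamma$ for every $i$, hence $[\hat{\mathbb{T}}\cdot\mathbf{n}]=\mathbf{0}$. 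Substituting $\hat{\mathbb{T}}=\mathbb{T}+U\,I$ yields $[\mathbb{T}\cdot\mathbf{n}]=-[U]\,\mathbf{n}=:\mathbf{T}$, which is the desired jump condition \eqref{inter2}.

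I do not anticipate any substantive obstacle, as every step of the 2D argument generalizes componentwise. The only bookkeeping care required is to place the potential correction on the diagonal (adding $U\,\delta_{ji}$ rather than $U$ to every entry of the tensor) so that $\hat{\mathbb{T}}-\mathbb{T}$ remains a scalar matrix depending only on $U$, as demanded by the $n$-dimensional analogue of the preceding lemma; beyond that, the divergence-theorem computation and the test-function argument are identical in any dimension.
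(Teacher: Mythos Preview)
Your proposal is correct and follows exactly the route the paper intends: the paper's own ``proof'' of this theorem is simply the sentence ``Similarly, we can prove the following result,'' referring back to the 2D argument you have faithfully generalized componentwise. You are in fact slightly more explicit than the paper, since you identify the modified tensor as $\hat{\mathbb{T}}=\mathbb{T}+U\,I$ and the resulting jump as $\mathbf{T}=-[U]\,\mathbf{n}$, whereas the paper leaves $\mathbf{T}:=[(\mathbb{T}-\hat{\mathbb{T}})\cdot\mathbf{n}]$ unspecified.
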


\begin{remark}
In interface condition $[\mathbb{T}\cdot \mathbf{n}]=\mathbf{T}$,  $\mathbf{T}$ is a given vector-valued function on the interface
$\Gamma$, which measures the jump of the traction $\mathbb{T}\cdot\mathbf{n}$ across the interface $\Gamma$.
\end{remark}

Moreover, physically,  we usually enforce a jump conditions to ensure that the material has no fracture in the weak discontinuity setting, i.e., it is continuous across the interface
$$
[\mathbf{u}]=\mathbf{0}.
$$
However, fractures commonly occurs  for many materials which is known as the strong discontinuity in the elasticity mechanic literature. Therefore, a known fracture  is often applied
$$
[\mathbf{u}]= (\varphi_1,\varphi_2)^T,
$$
where $\varphi_1 $ and $\varphi_2$ are fracture components for $u_1$ and $u_2$, respectively.

\subsection{Elasticity interface problem}
Based on the above discussion, two dimensional static interface problems can be formulated as:
\begin{eqnarray}
     \nabla\cdot\mathbb{T}+\mathbf{F} &=& \mathbf{0}, \quad \mbox{in} \quad  \Omega^+\cup\Omega^-,\\
    \left[\mathbf{u}\right] &=& \mathbf{b}, \quad \mbox{on} \quad \Gamma,\\
    \left[\mathbb{T}\cdot\mathbf{n}\right] &=& \mathbf{T}, \quad \mbox{on} \quad \Gamma,\\
    \mathbf{u} &=& \mathbf{u}^0, \quad  \mbox{on} \quad \partial\Omega.
\end{eqnarray}
where $\mathbf{u}({\bf x})=(u_1({\bf x}), u_2({\bf x}))^T: \Omega\rightarrow \mathbb{R}^2$ with ${\bf x}=(x, y)$ is the displacement field. Generally, if vector $\mathbf{b}$ does not equal 0, it is called the strong discontinuity. Otherwise, we arrived at the weak discontinuity. Vector $\mathbf{n}=(n_1, n_2)^T$ is the unit outer normal vector of the interface
$\Gamma$, and $\mathbf{T}=(\phi, \psi)^T$ is a given vector valued function on the interface $\Gamma$ which
measures the jump of the traction $\mathbb{T}\cdot \mathbf{n}$ across the interface $\Gamma$.
Here $\mathbf{F}$ is a 2D vector-valued function which denotes the body force on the elastic object, and
$\mathbf{u}^0=(u_{1}^0, u_{2}^0)$ is also a 2D vector-valued function to determine the Dirichlet boundary condition.

For isotropic elasticity problems,  the stress-strain relation is given by
\begin{equation}
\label{strain_stress}
\mathbb{T}=\lambda {\rm tr}(\sigma)I+2\mu\sigma,
\end{equation}
where
$$
\sigma=\frac{1}{2}\left(\nabla \mathbf{u}+(\nabla \mathbf{u})^T\right),
$$
is the linear strain, and $\lambda$ and $\mu$ are two   Lam\'{e}'s parameters, which can be constants or spatially dependent functions.
These two situations are discussed below.

\subsubsection{General formulation for homogeneous media}

A special case is that, all the material parameters are constants or piecewise constants, i.e., Lam\'{e}'s parameters
$\lambda$ and $\mu$, Young modulus $E$, and Poisson's ratio $\nu$ are all constants or piecewise constants. In this case, the above
parameters satisfies the following relationships
$$
\mu=\f{E}{2(1+\nu)},\qquad\lambda=\f{E\nu}{(1+\nu)(1-2\nu)}.
$$
%

Particularly, we assume that   shear modulus and Poisson's ratio are given, respectively, by:
\begin{equation}
\label{shmo1}
\mu=
\left\{\begin{array}{ll}
\mu^+, &\ \  \mbox{in}\ \ \Omega^+,\\
\mu^-, &\ \  \mbox{in}\ \ \Omega^-.
\end{array}\right. , \quad \nu=
\left\{\begin{array}{ll}
\nu^+, &\ \  \mbox{in}\ \ \Omega^+,\\
\nu^-, &\ \  \mbox{in}\ \ \Omega^-.
\end{array}\right. .
\end{equation}
Usually, Poisson's ratios satisfy constraints $\nu^+$ and $\nu^- \in (0, 0.5)$.  When they are close to $0.5$, the material is near  incompressible. Otherwise,
the material  is compressible.

The governing equations for the linear elasticity motion in the homogeneous media is given by:
\begin{eqnarray}\label{el1}
&&2(1-\nu)\f{\pa^2u_1}{\pa x^2}+(1-2\nu)\f{\pa^2u_1}{\pa y^2}+\f{\pa^2 u_2}{\pa x\pa y}   = -\frac{F_1}{\mu+\lambda}, \quad \mbox{on} \quad  \Omega^+\cup\Omega^-, \\ \label{el2}
&&(1-2\nu)\f{\pa^2 u_2}{\pa x^2}+2(1-\nu)\f{\pa^2 u_2}{\pa y^2}+\f{\pa^2 u_1}{\pa x\pa y} = -\frac{F_2}{\mu+\lambda}, \quad \mbox{on} \quad  \Omega^+\cup\Omega^-,
\end{eqnarray}
with the interface jump condition defined on $\Gamma$
\begin{eqnarray}\label{el3}
&&[u_1]|_\Gamma = 0, \\ \label{el4}
&&[u_2]|_\Gamma = 0,  \\ \label{el5}
&&\left[\f{2\mu}{1-2\nu}\left((1-\nu)\f{\pa u_1}{\pa x}+\nu\f{\pa u_2}{\pa y}\right)n_1
+\mu(\f{\pa u_1}{\pa y}+\f{\pa u_2}{\pa x})n_2\right]|_\Gamma = \phi,  \\\label{el6}
&&\left[\mu(\f{\pa u_1}{\pa y}+\f{\pa u_2}{\pa x})n_1+\f{2\mu}{1-2\nu}\left(\nu\f{\pa u_1}{\pa x}+
(1-\nu)\f{\pa u_2}{\pa y}\right)n_2\right]|_\Gamma = \psi.
\end{eqnarray}

\subsubsection{General formulation for inhomogeneous media}

Spatially dependent  Lam\'{e}'s parameters  frequently occur in many practical applications. The spatial dependence can be described as
\begin{equation}
\label{shmo}
\mu=
\left\{\begin{array}{ll}
\mu^+(\mathbf{x}), &\ \  \mbox{in}\ \ \Omega^+,\\
\mu^-(\mathbf{x}), &\ \  \mbox{in}\ \ \Omega^-.
\end{array}\right. , \quad
\lambda=
\left\{\begin{array}{ll}
\lambda^+(\mathbf{x}), &\ \  \mbox{in}\ \ \Omega^+,\\
\lambda^-(\mathbf{x}), &\ \  \mbox{in}\ \ \Omega^-,
\end{array}\right. .
\end{equation}

Linear elasticity motion in inhomogeneous media is governed by:
\begin{eqnarray} \label{el12}
(\lambda+2\mu)\f{\partial^2 u_1}{\partial x^2}+\mu\f{\partial^2 u_1}{\partial y^2}+(\lambda+\mu)\f{\partial^2 u_2}{\partial x \partial y}
+(\lambda_x+2\mu_x)\f{\partial u_1}{\partial x}+\lambda_x \f{\partial u_2}{\partial y}+\mu_y\f{\partial u_1}{\partial y}+ \mu_y\f{\partial u_2}{\partial x} =  -F_1, \\ \label{el22}
\mu\f{\partial u_2}{\partial x^2}+(\lambda+2\mu)\f{\partial u_2}{\partial^2 y^2}+(\lambda+\mu)\f{\partial^2 u_1}{\partial x \partial y}+
\mu_x\f{\partial u_1}{\partial y}+ \mu_x\f{\partial u_2}{\partial x}+ (\lambda_y+2\mu_y)\f{\partial u_2}{\partial y}+\lambda_y \f{\partial u_1}{\partial x}
= -F_2.
\end{eqnarray}
Here these two equations are defined in the domain $\Omega^+\cup\Omega^-$.






\begin{remark}
As discussed earlier, the non-fracture conditions can be relaxed in both theoretical  modeling and numerical analysis
\begin{equation}
\label{el32x}
[u_1]|_\Gamma=\varphi_1, \ \mbox{on}\ \Gamma,
\end{equation}
and
\begin{equation}
\label{el42y}
[u_2]|_\Gamma=\varphi_2, \ \mbox{on}\ \Gamma.
\end{equation}
\end{remark}

\section{Methods and algorithms}\label{algorithm}

In this section we develop the second order MIB method for the elasticity interface problem with irregular interface.
We consider a rectangular domain  $\Omega=[a, b]\times[c, d]$.  Let $h$ be the grid size in griding the rectangular domain and
suppose the grid points to be:
$$
x_i=a+(i-1)h,\ \  y_j=c+(j-1)h,\ \  i=1, 2, \cdots,n_x;\ \ j=1, 2, \cdots,n_y.
$$
Here $n_x$ and $n_y$ are the total numbers of grid points in the $x$- and $y$-directions, respectively.
In the standard central finite difference(CFD) discretizations scheme, due to the existence of interface and possibly discontinuous of the solution, the direct use of the CFD scheme may decrease the numerical accuracy, which cannot guarantee the second order convergence of the numerical solution.
The main idea of the MIB method, is to replace the referred function values that from the different side of the interface in the (CFD) discretizations schemes 
by the fictitious values, which are the combinations of function values and interface conditions for capturing the interface conditions and discontinuity of the solutions.

In order to handle the interface in the elasticity interface problem, the regular and irregular grid points in the finite difference scheme
of the elasticity equation should be distinguished.

\begin{definition}
A grid point $(i, j)$ is said to be a regular grid point provided all the grid points referred in the discretization of the elasticity
equations are at the same sub-domain as grid point $(i, j)$; otherwise the grid point is called an irregular grid point.
\end{definition}

For the irregular grid point $(i, j)$, if the function values at the different side of the interface applied to the finite difference scheme,
the accuracy of the numerical solution will be reduced. In the MIB finite difference
scheme, we replace function values which are referred in the different side of the interface with fictitious values.

In the following part of this section we discuss how to extend the solution across the interface such that the second order MIB discretization works. To clarify, we call the discretizations of ordinary derivatives $(u_1)_{xx}, (u_1)_{yy}, (u_2)_{xx} $ and $  (u_2)_{yy}$ as a five-point stencil, and the discretizations of the cross derivatives as a nine-point stencil. We discuss how to determine fictitious values in these two stencils separately.

\subsection{General algorithms for fictitious value}\label{ordinary-derivative}

In this section, we develop the MIB  method for  the  discretization of  the ordinary derivatives in elasticity equations via fictitious values. Here ordinary derivatives refer to first or second order derivatives along a single direction.  In contrast, a cross derivative involves at least two directions  and requires special treatments near the interface. For simplicity, we only  construct the scheme for the constant material parameter case, while the case of spatially dependent parameters can be treated in the same manner.

\subsubsection{Fictitious scheme for regular interface}
The interface is said to be regular at the grid point $(i, j)$ if the interface is locally parallel or vertical to the mesh lines that passing through the grid point $(i, j)$. For the regular interface, we can determine  fictitious values via an iteratively approach, which ensures that the MIB method can be made into arbitrarily high order.  A detail description of the MIB procedure has been given for  iteratively  finding fictitious values for regular interface \cite{Zhou:2006c}.

In this section, we focus on the method for estimating  fictitious values for a five-point stencil in the second order finite difference scheme of elasticity equations.
\begin{figure}[!ht]
\small
\centering
\includegraphics[width=6cm,height=5cm]{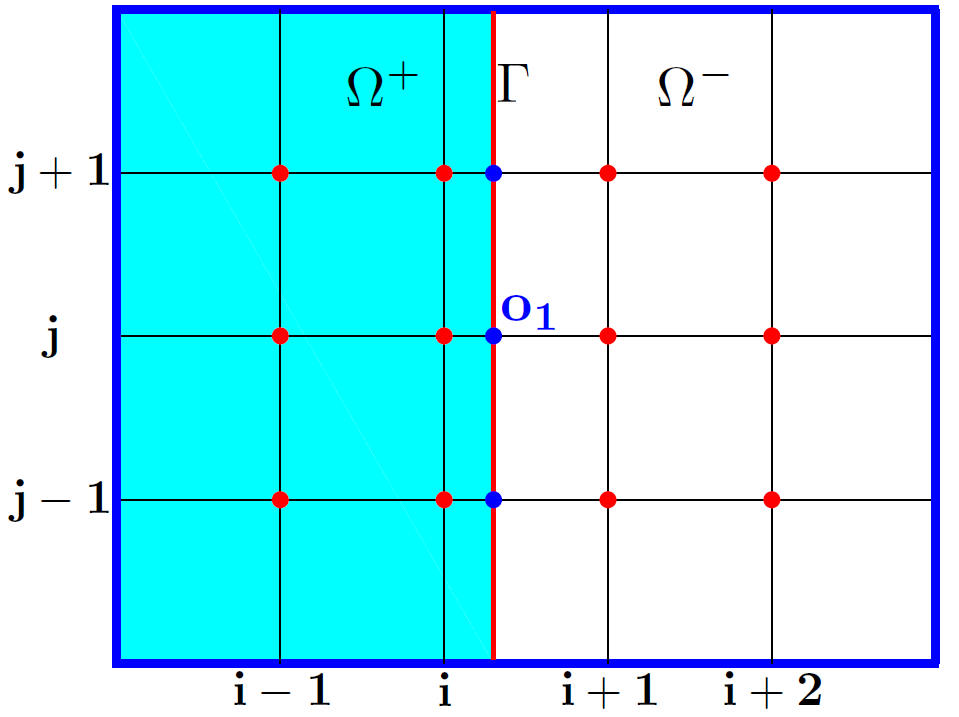}
\caption{Illustration of a regular interface. The interface (red line) is vertical to the horizontal mesh line. In the standard second order MIB finite difference scheme, fictitious values are required at  grid points $(i, j)$ and $(i+1, j)$.  To determine the fictitious value at $(i, j)$ and $(i+1, j)$,  one needs to compute $\left(\mathbf{u}^+\right)_{o_1}$ $\left(\mathbf{u}^-\right)_{o_1}$  $\left( \frac{\partial \mathbf{u}^+}{\partial x}\right)_{o_1}$ and  $\left( \frac{\partial \mathbf{u}^-}{\partial x}\right)_{o_1}$. To this end, $\left(\mathbf{u}^+\right)_{o_1}$ and 
$\left( \frac{\partial \mathbf{u}^+}{\partial x}\right)_{o_1}$ are interpolated by the function values at $(i-1, j), (i, j)$ and fictitious values at $(i+1, j)$; $\left(\mathbf{u}^-\right)_{o_1}$ and $\left( \frac{\partial \mathbf{u}^-}{\partial x}\right)_{o_1}$ are interpolated by the function values at $(i+1, j), (i+2, j)$ and fictitious values at $(i, j)$.
}
\label{iter_reg}
\end{figure}

As is shown in Figure \ref{iter_reg}, the interface is  locally vertical to the $x$-mesh at point $o_1$. Here we only
discuss the case of a vertical regular interface, while the case of a parallel regular interface can be handled in the same manner.

We employ the second order accurate interpolation scheme to interpolate one-sided values at point $o_1$. Four fictitious values are involved, i.e., $f_1^c(i, j), f_2^c(i, j), f_1^c(i+1, j) $ and $ f_2^c(i+1, j)$, here $f_1^c(i, j)$ represents the fictitious value for $u_1$ at the grid point $(i, j)$, similar for others. In the following, we present the method to represent fictitious values by interface conditions and function values.

First, the outer normal direction $\mathbf{n}=(1, 0)^T$, when the interface is locally vertical to the $x$-mesh, the interface conditions (\ref{el3})-(\ref{el6}) become
\begin{eqnarray}\label{in1}
&&u^+_1-u^-_1 = 0, \\ \label{in2}
&&u^+_2-u^-_2 = 0,  \\ \label{in3}
&&\left(\frac{2\mu^+(1-\nu^+)}{1-2\nu^+}\frac{\pa u^+_1}{\pa y} +\frac{2\mu^+\nu^+}{1-2\nu^+}\frac{\pa u^+_2}{\pa y}\right)
-\left(\frac{2\mu^-(1-\nu^-)}{1-2\nu^-}\frac{\pa u^-_1}{\pa y} +\frac{2\mu^-\nu^-}{1-2\nu^-}\frac{\pa u^-_2}{\pa y}\right)=\phi,  \\\label{in4}
&&\left(\mu^+\frac{\pa u^+_1}{\pa y}+\mu^+\frac{\pa u^+_2}{\pa x}\right)-\left(\mu^-\frac{\pa u^-_1}{\pa y}+\mu^-\frac{\pa u^-_2}{\pa x}\right)=\psi.
\end{eqnarray}
The second order interpolation approximation of $(u^+_1)_{o_1}$, $(u^-_1)_{o_1}$, $(\frac{\pa u^+_1}{\pa x})_{o_1}$ and
$(\frac{\pa u^-_1}{\pa x})_{o_1}$ can be done by using the function and fictitious values at the grid points $(i-1, j)$,
$(i, j)$ and $(i+1, j)$
\begin{eqnarray}
\left(u^+_1\right)_{o_1}&=&w^+_{0, 1}u_1(i-1, j)+w^+_{0, 2}u_1(i, j)+w^+_{0, 3}f_1^c(i+1, j),\\
\left(u^-_1\right)_{o_1}&=&w^-_{0, 1}f_1^c(i, j)+w^-_{0, 2}u_1(i+1, j)+w^-_{0, 3}u_1(i+2, j), \\
\left(\frac{\pa u^+_1}{\pa x}\right)_{o_1}&=&w^+_{1, 1}u_1(i-1, j)+w^+_{1, 2}u_1(i, j)+w^+_{1, 3}f_1^c(i+1, j), \\
\left(\frac{\pa u^-_1}{\pa x}\right)_{o_1}&=&w^-_{1, 1}f_1^c(i, j)+w^-_{1, 2}u_1(i+1, j)+w^-_{1, 3}u_1(i+2, j),
\end{eqnarray}
where $\{w^+_{m, n} \mid m=0, 1; n=1, 2, 3\}$ are the coefficients in the Lagrangian interpolation and can be obtained by an appropriate algorithm in the literature \cite{fonberg:1998}. The interpolation approximation of $(u^+_2)_{o_1}$, $(u^-_2)_{o_1}$, $(\frac{\pa u^+_2}{\pa x})_{o_1}$ and $(\frac{\pa u^-_2}{\pa x})_{o_1}$ can be handled in the same manner.

Plugging the above finite difference approximation into  interface conditions (\ref{in1})-(\ref{in4}) and solving the resulting
approximation of  interface conditions, one can represent fictitious values at grid points $(i, j)$ and $(i+1, j)$
by function values at grid points $(i-1, j)$, $(i, j)$, $(i+1, j)$ and  $(i+2, j)$, and interface conditions at $o_1$.

\subsubsection{Fictitious scheme for interface with small curvature}

We describe the MIB method  for the $C^1$ extension of the function values across the interface which is irregular but with
relatively small curvatures in this subsection.
Interface conditions  are given by Eqs. (\ref{el3})-(\ref{el6}).




Figure \ref{fic_x} illustrates the situation  where fictitious values along the $x$-direction are to be found. The case for determining fictitious values along  the $y$-direction is similar.

\begin{figure}[!ht]
\small
\centering
\includegraphics[width=6cm,height=5cm]{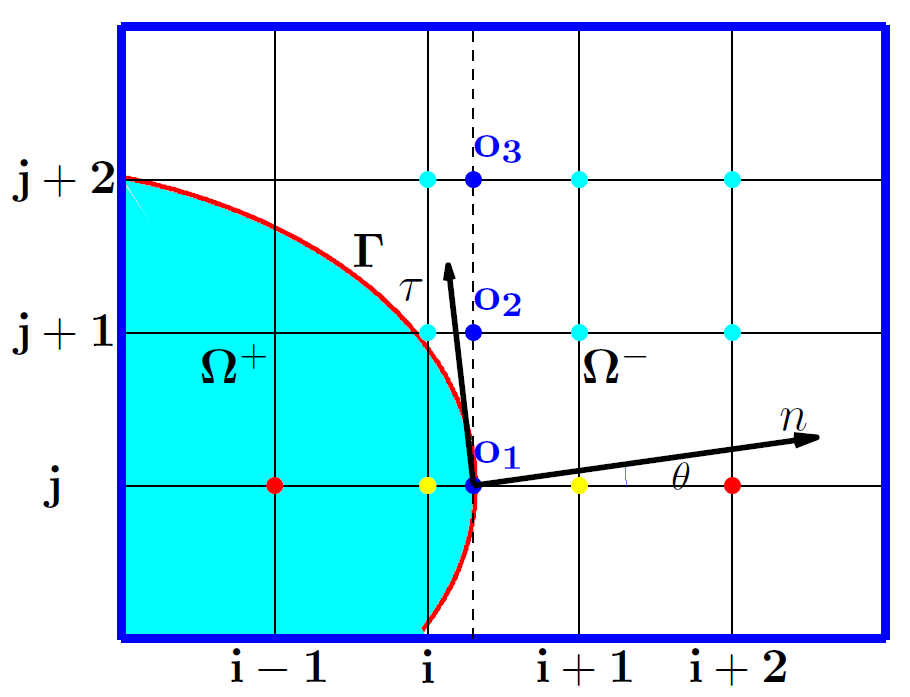}
\caption{Illustration of determining fictitious values along the $x$-direction. Fictitious values at two yellow grid point are to be found.  To this end, derivatives at $o_1$ need to be calculated.  Two red points ($(i, j)$ and $(i+1, j)$) are the points referred along the main direction. Six cyan points are auxiliary points used to approximate values at $o_2$ and $o_3$. Values at $o_1$ can be computed from the fictitious value at $(i,j)$ and function values at $(i+1,j)$ and $(i+2,j)$.  Then interpolated values at $o_1, o_2$ and $o_3$ are employed to estimate derivatives at $o_1$.}
\label{fic_x}
\end{figure}

When the interface is locally not aligned to an $x$-mesh line or $y$-mesh line, a  two-step tactic in the MIB approach can be used.  First,  one takes the derivative along the tangential direction of the interface to introduce one more set of interface conditions. Then one can eliminate some derivatives at the interface that are difficult to compute. This flexibility directly leads to the efficiency in handling versatile and difficult interface geometries. Finally, one obtains the representation of fictitious values according to interface conditions after the aforementioned two steps.

\begin{lemma}
For a given function $u$ defined on the domain $\Omega=\Omega^+\cup \Gamma \cup \Omega^-$, i.e., the domain is separated by
the interface $\Gamma \in C^1$ into two parts. If the function $u$ is piecewise $C^1$ continuous along the interface $\Gamma$, then its tangential derivative is continuous along the interface.
\end{lemma}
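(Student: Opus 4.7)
The plan is to reduce the statement to the chain rule applied to the composition of $u$ with a parametrization of the interface. Since $\Gamma \in C^1$, I would first pick an arbitrary point $p \in \Gamma$ and introduce a local $C^1$ arc-length parametrization $\gamma: (-\varepsilon, \varepsilon) \to \Gamma$ with $\gamma(0) = p$ and unit tangent $\tau(s) = \gamma'(s)$. This reduces a statement about intrinsic tangential differentiation on $\Gamma$ to ordinary differentiation of a one-variable function.

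Next I would use the hypothesis that $u$ is piecewise $C^1$ and continuous across $\Gamma$ (which I read into "piecewise $C^1$ continuous along the interface," since without some matching condition the tangential derivative from the two sides has no reason to agree). Let $u^+$ and $u^-$ denote the restrictions, each extendable as a $C^1$ function up to $\Gamma$ from its own side. Continuity of $u$ on $\Omega$ gives the identity
\begin{equation*}
u^+(\gamma(s)) \;=\; u^-(\gamma(s)) \qquad \text{for all } s \in (-\varepsilon, \varepsilon).
\end{equation*}
Both sides are $C^1$ functions of $s$ because $\gamma$ is $C^1$ and the one-sided gradients $\nabla u^{\pm}$ extend continuously to $\Gamma$.

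Differentiating this identity in $s$ and applying the chain rule at $s=0$ yields
\begin{equation*}
\nabla u^+(p)\cdot \tau(0) \;=\; \nabla u^-(p)\cdot \tau(0),
\end{equation*}
which is exactly the statement that the tangential derivative $\partial_\tau u$ has no jump across $\Gamma$ at $p$. Since $p \in \Gamma$ was arbitrary, $\partial_\tau u$ is continuous along $\Gamma$.

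I do not expect a genuinely hard step; the only subtle point is making precise what "piecewise $C^1$ continuous along the interface" means and justifying that the one-sided gradients have continuous traces on $\Gamma$, so that the chain rule above is legitimate up to the boundary. Once those regularity points are spelled out, the argument is a one-line chain-rule computation, and this is the natural obstacle to address carefully in the write-up.
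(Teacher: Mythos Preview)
Your argument is correct and is the standard way to prove this fact: parametrize $\Gamma$ locally, use the continuity $u^+=u^-$ on $\Gamma$ to obtain an identity of one-variable functions, and differentiate via the chain rule to conclude $[\partial_\tau u]=0$. You have also correctly identified the one point that needs care, namely that ``piecewise $C^1$ continuous along the interface'' must be read as $u$ continuous across $\Gamma$ with one-sided gradients $\nabla u^\pm$ extending continuously up to $\Gamma$, so that the chain rule applies up to the boundary.

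For comparison: the paper states this lemma without proof and simply invokes it to generate the additional tangential jump conditions (\ref{el522})--(\ref{el622}). Your write-up therefore supplies what the paper omits, and the argument you outline is exactly the one a reader would expect to fill in.
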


Let the tangential direction be $\mathbf{\tau}=(\tau_1, \tau_2)$. By taking a derivative along the tangential direction,   the obtained new set of  interface jump conditions along the tangential direction are:


\begin{equation}
\label{el522}
\left[\frac{\pa u_1}{\pa x}\tau_1+\frac{\pa u_1}{\pa y}\tau_2\right]|_\Gamma=0,
\end{equation}
and
\begin{equation}
\label{el622}
\left[\frac{\pa u_2}{\pa x}\tau_1+\frac{\pa u_2}{\pa y}\tau_2\right]|_\Gamma=0,
\end{equation}


As shown in Figure \ref{fic_x}, let the angle between the normal direction and $x$-mesh be $\theta$, in this case, the normal
direction is $\mathbf{n}=(\cos\theta, \sin\theta)$, and the tangential direction is $\mathbf{\tau}=(-\sin\theta, \cos \theta)$, where
$0 \leq \theta \leq 2\pi$. For the irregular interface, one has  $\theta \neq 0, 0.5\pi, 1.0\pi, 1.5\pi $ and $ 2\pi$.

If we define a vector $C$ as,
\begin{eqnarray}
     C=\left(\frac{\pa u^+_1}{\pa x},\frac{\pa u^-_1}{\pa x},\frac{\pa u^+_1}{\pa y},\frac{\pa u^-_1}{\pa y},\frac{\pa u^+_2}{\pa x},\frac{\pa u^-_2}{\pa x},\frac{\pa u^+_2}{\pa y},\frac{\pa u^-_2}{\pa y}\right)^T.
\end{eqnarray}
The interface condition along the tangential direction (\ref{el522})-(\ref{el622}) can be represented as,
\begin{eqnarray}
 &&   (-\sin\theta, \sin\theta, \cos\theta, -\cos\theta, 0, 0, 0, 0)\cdot C=0, \\
 &&  (0 , 0, 0, 0, -\sin\theta, \sin\theta, \cos\theta, -\cos\theta)\cdot C=0.
\end{eqnarray}
Furthermore, additional interface conditions along the normal direction can be written as
\begin{eqnarray}
 &&   (M^+\cos\theta, -M^-\cos\theta, \mu^+\sin\theta, -\mu^-\sin\theta, \mu^+\sin\theta, -\mu^-\sin\theta, \lambda^+ \cos\theta, -\lambda^- \cos\theta)\cdot C=\phi, \\
 &&  (\lambda^+\sin\theta, -\lambda^-\sin\theta, \mu^+ \cos\theta, -\mu^-\cos\theta, \mu^+ \cos\theta, -\mu^-\cos\theta, M^+\sin\theta,-M^-\sin\theta )\cdot C=\psi.
\end{eqnarray}
where $M=\frac{2\mu(1-\nu)}{1-2\nu}$ is the $p-$wave modulus.

In the standard second-order MIB finite difference scheme, two fictitious values occur for each of  elasticity equations near the interface. A total of four fictitious values are to be estimated. Since there are six interface conditions (\ref{el3})-(\ref{el6}) and (\ref{el522})-(\ref{el622}), one can use  them to eliminate two of eight derivatives.
The selection of the derivatives should be based on the local geometry of the interface.  A main principle is to eliminate the derivatives that are most difficult to compute due to the local geometric constraint.

\begin{lemma}
The following matrix is of full rank,
$$
\left(
  \begin{array}{cccccccc}
    -\sin\theta & \sin\theta & \cos\theta & -\cos\theta & 0 & 0 & 0 &0 \\
    0 & 0& 0& 0& -\sin\theta & \sin\theta & \cos\theta &-\cos\theta\\
    M^+\cos\theta & -M^-\cos\theta & \mu^+\sin\theta & -\mu^-\sin\theta & \mu^+\sin\theta &-\mu^-\sin\theta &\lambda^+\cos\theta &-\lambda^-\cos\theta\\
    \lambda^+\sin\theta & -\lambda^-\sin\theta & \mu^+\cos\theta & -\mu^-\cos\theta & \mu^+\cos\theta &-\mu^-\cos\theta &M^+\sin\theta &-M^-\sin\theta
  \end{array}
\right)
$$
provided that  $\theta \neq 0, 0.5\pi, \pi, 1.5\pi $ and $ 2\pi$, i.e.,
the local interface does not parallel or vertical to the local mesh directions.
\end{lemma}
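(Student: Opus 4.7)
The plan is to exhibit a single $4\times 4$ submatrix whose determinant is manifestly nonzero, which immediately forces the $4\times 8$ matrix to have rank $4$. A natural choice is to select the four columns corresponding to the $\Omega^+$-side partial derivatives, i.e.\ columns $1, 3, 5, 7$ of the matrix, giving
$$
A^+(\theta)=\begin{pmatrix}
-\sin\theta & \cos\theta & 0 & 0 \\
0 & 0 & -\sin\theta & \cos\theta \\
M^+\cos\theta & \mu^+\sin\theta & \mu^+\sin\theta & \lambda^+\cos\theta \\
\lambda^+\sin\theta & \mu^+\cos\theta & \mu^+\cos\theta & M^+\sin\theta
\end{pmatrix}.
$$
This is a convenient choice because rows $1$ and $2$ of $A^+$ are very sparse (two nonzero entries each, supported on disjoint column pairs), so Laplace expansion along row $1$ reduces the problem to two $3\times 3$ determinants.

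First I would write $c=\cos\theta$, $s=\sin\theta$ and expand $\det A^+$ along the first row to obtain $\det A^+ = -s\, M_{11} - c\, M_{12}$, where $M_{11},M_{12}$ are the indicated $3\times 3$ minors. For $M_{11}$, I would simplify by observing that the first two columns of that minor agree in rows $2$ and $3$, so the column operation $C_2\leftarrow C_2-C_1$ produces a column with a single nonzero entry at the top; this reduces $M_{11}$ to a $2\times 2$ determinant and yields $M_{11}=s\mu^+\lr{M^+ s^2-\lambda^+ c^2}$. For $M_{12}$, a direct expansion along the first row gives $M_{12}=c\mu^+\bigl[2s^2(M^++\lambda^+)+M^+ c^2-\lambda^+ s^2\bigr]$ after using the identity $(M^+)^2-(\lambda^+)^2=(M^+-\lambda^+)(M^++\lambda^+)=2\mu^+(M^++\lambda^+)$, which is available because $M=\lambda+2\mu$ by definition of the $P$-wave modulus.

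Substituting these minors back and collecting terms, the mixed terms cancel and the Pythagorean identity $s^2+c^2=1$ collapses the remaining expression to
$$
\det A^+(\theta) = -\mu^+ M^+\lr{s^2+c^2}^2 = -\mu^+ M^+.
$$
In particular the determinant is independent of $\theta$, and is strictly negative whenever the material is physically admissible ($\mu^+>0$ and $M^+=\lambda^++2\mu^+>0$, which holds for $0<\nu^+<1/2$). Hence $A^+$ is nonsingular for every $\theta$, so the $4\times 8$ matrix has rank $4$.

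The main obstacle is purely algebraic: one must keep the signs and indices straight in the cofactor expansion and then simplify the $s,c$ polynomial so that the $M^+$ and $\lambda^+$ terms combine cleanly. A useful sanity check before writing the final form is to evaluate $\det A^+$ at the excluded angles $\theta=0$ and $\theta=\pi/2$; both give $-\mu^+ M^+$, which confirms that the hypothesis $\theta\notin\{0,\pi/2,\pi,3\pi/2,2\pi\}$ is in fact only needed to guarantee the geometric setup of the irregular interface elsewhere in the algorithm, not for the algebraic rank claim itself. One could equally well use the submatrix on columns $2,4,6,8$, whose determinant is $-\mu^- M^-$ by the same computation, so the statement is fully symmetric in the two sides.
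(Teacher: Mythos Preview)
Your proof is correct. The submatrix $A^+(\theta)$ on columns $1,3,5,7$ indeed has determinant $-\mu^+M^+$ independently of $\theta$, and I verified each step of your cofactor computation; the simplification via $M^+-\lambda^+=2\mu^+$ is the key identity and you use it in exactly the right place.

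The paper itself states this lemma without proof, so there is no approach to compare against. Your argument is more informative than a bare assertion in two respects. First, the explicit value $\det A^+=-\mu^+M^+$ shows that the rank--$4$ conclusion rests only on the physical admissibility $\mu^+>0$, $M^+>0$, and holds for \emph{every} $\theta$; the hypothesis $\theta\notin\{0,\tfrac{\pi}{2},\pi,\tfrac{3\pi}{2},2\pi\}$ is therefore superfluous for the linear--algebraic claim, as you correctly point out, and is only relevant to the surrounding geometric construction in the algorithm. Second, your remark that the same computation on columns $2,4,6,8$ yields $-\mu^-M^-$ makes the symmetry between the two sides of the interface explicit, which is natural given how the matrix is built.
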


According to the above lemma, when the local interface is irregular, the third and fourth rows of the above matrix should be used
to minus the linear combinations of the first and second rows so as to replace two interface conditions and leave two interface conditions
to compute the fictitious values.

There are four ways to eliminate  derivatives at the interface as  discussed below.
\begin{itemize}
\item The elimination of the derivatives $\frac{\pa u^-_1}{\pa y}$ and $\frac{\pa u^-_2}{\pa y}$ generates the following combined interface conditions:
\begin{eqnarray}\nonumber
&&-\phi \cos\theta = \left(-M^+\cos^2\theta-\mu^-\sin^2\theta, M^-\cos^2\theta+\mu^-\sin^2\theta, (\mu^--\mu^+)\sin\theta\cos\theta,  0 \right.\\
&&\qquad \qquad \left. -(\lambda^-+\mu^+)\sin\theta\cos\theta, (\lambda^-+\mu^-)\cos\theta\sin\theta, (\lambda^--\lambda^+)\sin\theta\cos\theta, 0 \right)\cdot C,\\ \nonumber
&&-\psi \cos\theta = \left(-(\lambda^++\mu^-)\sin\theta\cos\theta, (\lambda^-+\mu^-)\cos\theta\sin\theta, (\mu^--\mu^+)\cos^2\theta, 0 \right. \\
&&\qquad \qquad  \left.-\mu^+\cos^2\theta-M^-\sin^2\theta, \mu^-\cos^2\theta+M^-\sin^2\theta, (M^--M^+)\sin\theta\cos\theta, 0 \right)\cdot C .
\end{eqnarray}

\item The elimination of the derivatives $\frac{\pa u^+_1}{\pa y}$ and $\frac{\pa u^+_2}{\pa y}$ generates the following combined interface conditions:
\begin{eqnarray}\nonumber
&&\phi\cos\theta = \left(M^+\cos^2\theta+\mu^+\sin^2\theta, -(M^-\cos^2\theta+\mu^+\sin^2\theta) , 0,
(\mu^+-\mu^-)\sin\theta\cos\theta,\right.\\
&&\qquad \qquad \left.(\mu^++\lambda^+)\sin\theta\cos\theta,-(\mu^-+\lambda^+)\sin\theta\cos\theta , 0, (\lambda^+-\lambda^-)\cos^2\theta \right)\cdot C,\\ \nonumber
&&\psi \cos\theta = \left((\lambda^++\mu^+)\sin\theta\cos\theta, -\lambda^-\sin^2\theta-\mu^+\sin\theta\cos\theta, 0,
-(\mu^+-\mu^-)\cos^2\theta, \right. \\
&&\qquad \qquad  \left.\mu^+\cos^2\theta+M^+\sin^2\theta, -\mu^-\cos^2\theta-M^+\sin^2\theta, 0, (M^+-M^-)\sin\theta\cos\theta \right)\cdot C .
\end{eqnarray}

\item The elimination of the derivatives $\frac{\pa u^-_1}{\pa x}$ and $\frac{\pa u^-_2}{\pa x}$ generates the following combined interface conditions:
\begin{eqnarray}\nonumber
&&\phi\sin\theta = \left((M^+-M^-)\sin\theta\cos\theta, 0, \mu^+\sin^2\theta+M^-\cos^2\theta, -\mu^-\sin^2\theta-M^-\cos^2\theta, \right.\\
&&\qquad \qquad \left.(\mu^+-\mu^-)\sin^2\theta,0, (\lambda^++\mu^-)\sin\theta\cos\theta, -(\lambda^-+\mu^-)\sin\theta\cos\theta  \right)\cdot C ,\\ \nonumber
&&\psi\sin\theta = \left((\lambda^+-\lambda^-)\sin^2\theta, 0, (\mu^++\lambda^-)\sin\theta\cos\theta, -(\mu^-+\lambda^-)\sin\theta\cos\theta,\right. \\
&&\qquad \qquad  \left.(\mu^+-\mu^-)\sin\theta\cos\theta, 0, M^+\sin^2\theta+\mu^-\cos^2\theta, -M^-\sin^2\theta-\mu^-\cos^2\theta \right)\cdot C .
\end{eqnarray}

\item The elimination of the derivatives $\frac{\pa u^+_1}{\pa x}$ and $\frac{\pa u^+_2}{\pa x}$ generates the following combined interface conditions:
\begin{eqnarray}\nonumber
&&-\phi\sin\theta = \left(0, (M^--M^+)\sin\theta\cos\theta, -(\mu^+\sin^2\theta+M^+\cos^2\theta), \mu^-\sin^2\theta+M^+\sin^2\theta,\right.\\
&&\qquad \qquad \left.0, (\mu^--\mu^+)\sin^2\theta, -(\lambda^++\mu^+)\sin\theta\cos\theta, (\lambda^-+\mu^+)\sin\theta\cos\theta
\right)\cdot C ,\\ \nonumber
&&-\psi\sin\theta = \left(0, (\lambda^++\lambda^-)\sin^2\theta, -\mu^+\sin\theta\cos\theta-\lambda^+\sin^2\theta, (\mu^--\lambda^+)\sin\theta\cos\theta, \right. \\
&&\qquad \qquad  \left.0, (\mu^-+\mu^+)\sin\theta\cos\theta, -M^+\sin^2\theta-\mu^+\cos^2\theta, M^-\sin^2\theta+\mu^+\cos^2\theta\right)\cdot C .
\end{eqnarray}
\end{itemize}


For a  given irregular grid point, according to the local interface geometry, one of the replaced interface conditions
from above should be chosen to represent  fictitious values. Here, we discuss the interface shown in Figure \ref{fic_x}.
In this case,  one-sided derivatives $\frac{\pa u^+_1}{\pa y}$ and $\frac{\pa u^+_2}{\pa y}$ are  to be eliminated. Therefore,
the first set of interface conditions is employed. Note that the interpolation approximation of one-sided function values
and derivatives referred in the first set of interface conditions for $u_1$ is given as:
$$
u^-_1=(f_1^c(i, j), u_1(i+1, j), u_1(i+2, j))\cdot(w_{0, 0}, w_{0, 1}, w_{0, 2})^T,
$$
$$
u^+_1=(u_1(i-1, j), u_1(i, j), f_1^c(i+1, j))\cdot (\tilde{w}_{0, 0}, \tilde{w}_{0, 1}, \tilde{w}_{0, 2})^T,
$$
$$
\frac{\pa u^-_1}{\pa x}=(f_1^c(i, j), u_1(i+1, j), u_1(i+2, j))\cdot(w_{1, 0}, w_{1, 1}, w_{1, 2})^T,
$$
$$
\frac{\pa u^+_1}{\pa x}=(u_1(i-1, j), u_1(i, j), f_1^c(i+1, j))\cdot (\tilde{w}_{1, 0}, \tilde{w}_{1, 1}, \tilde{w}_{1, 2})^T,
$$
$$
\frac{\pa u^-_1}{\pa y}=(u_1(o, j), u_1(o, j+1), u_1(o, j+2))\cdot (w^*_{1, 0}, w^*_{1, 1}, w^*_{1, 2})^T,
$$
where $w_{i, j}, \tilde{w}_{i, j}$ and $w^*(1, j)$, $i=0, 1,\  j=0, 1, 2$ are the lagrangian interpolation coefficients.

Three off-grid points are referred in the approximation of $\frac{\pa u^-_1}{\pa y}$. Here
$$
u_1(o, j):=u^+_1(o, j)=u^-_1(o, j)+[u_1].
$$
Additionally,  we regard the value at the  on-interface grid point as the inside one
$$
u_1(o, j+1)=(u_1(i, j+1), u_1(i+1, j+1), u_1(i+2, j+1))\cdot(p_{0, 0}, p_{0, 1}, p_{0, 2})^T,
$$
$$
u_2(o, j+2)=(u_2(i, j+2), u_2(i+1, j+2), u_2(i+2, j+2))\cdot(p^*_{0, 0}, p^*_{0, 1}, p^*_{0, 2})^T,
$$
where $p_{0, j}, p^*_{0, j}, j=0, 1, 2$ are the Lagrangian interpolation coefficients.

The value of  $u_2$ can be approximated similarly.

Replacing the values in the first set of interface conditions by the above approximated values, and solving the generated equations  give the representation of the fictitious values that needed in the second order central finite difference schemes.

\subsubsection{Fictitious scheme for interface with large curvature}
\begin{figure}[!ht]
\small
\centering
\includegraphics[width=5cm,height=5cm]{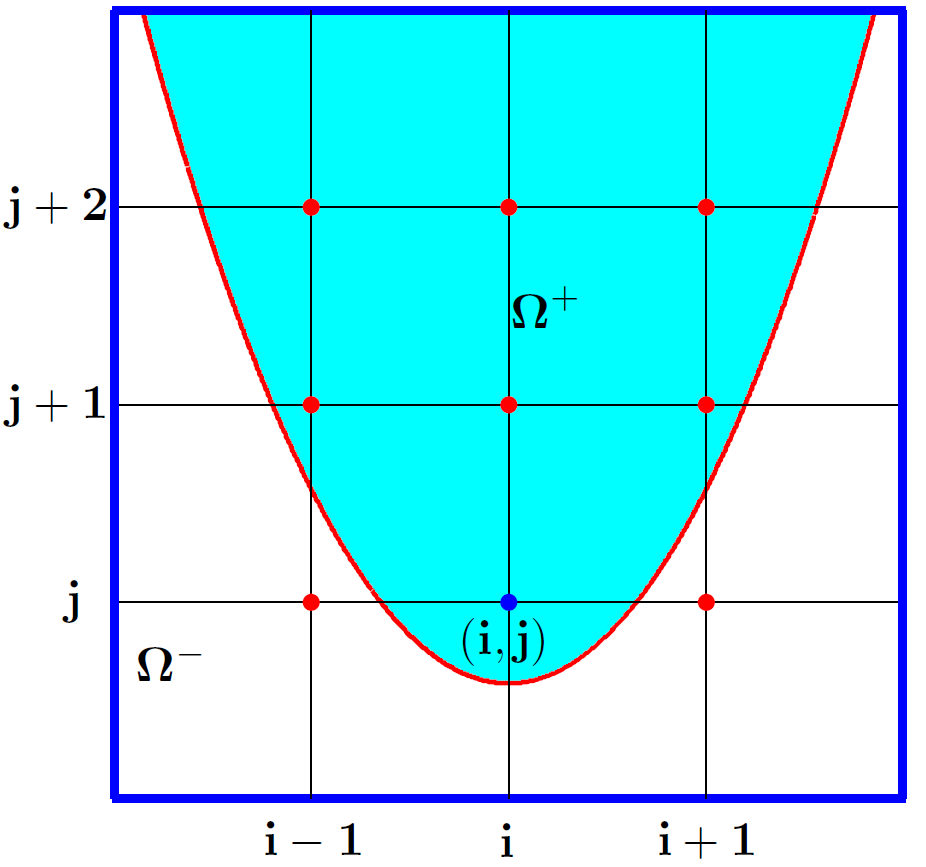}
\caption{Illustration of an interface with large curvature. In this case, the fictitious value at grid point $(i, j)$ cannot be calculated from the horizontal direction due to the lack of any inside grid point. However, it can be obtained from the vertical direction. In the discretization scheme, the fictitious value at $(i, j)$ found from the vertical direction is utilized for both vertical and horizontal discretizations of the derivatives in the governing equation.}
\label{large_5_fic}
\end{figure}

For the interface that with large curvature, the above procedure for finding fictitious values may fail at some irregular grid points. As shown in Figure \ref{large_5_fic},   the method proposed above does  not work in finding fictitious values for   irregular grid point $(i, j)$ along the $x$-direction. However, the method works for finding fictitious values at $(i, j)$ in the $y$-direction. In this case, a disassociation scheme was proposed in the MIB scheme to use fictitious values obtained along the $y$-direction to replace  fictitious values along the $x$-direction. For more detail on the disassociation scheme, reader is referred to our earlier work \cite{Zhou:2006d}. The disassociation technique retains the second-order numerical accuracy.

\subsection{Special algorithms for fictitious value for cross derivatives}

Unlike the Poisson equation which does not admit any cross derivative, elasticity equations involve cross derivatives which give rise to additional
numerical difficulties when  the interface geometry is complex.  In this section, we propose new schemes to determine fictitious values for the discretization of  cross derivatives.

First, one notes that in the discretization of the  cross derivative at a given  grid point $(i, j)$,
in addition to original five points, i.e.,  $(i, j)$,  $(i, j-1)$,  $(i, j+1)$,  $(i-1, j)$, and  $(i+1, j)$,
four more adjacent  points, namely, $(i-1, j-1)$, $(i-1, j+1)$, $(i+1, j-1)$ and  $(i+1, j+1)$ are involved in the standard central
finite difference scheme.  Therefore, more irregular points are created near the interface due to the discretization of cross derivatives.
As a result, the MIB schemes are to be extended for cross derivatives. Difficulties raise in the determination of fictitious values as the interface is complex or has large curvatures. We propose two methods, i.e., disassociation scheme and extrapolation scheme, for the determination of  fictitious values used in discretizing cross derivatives.

\subsubsection{Disassociation scheme}
To facilitate our further discussion, we classify irregular grid points into disassociation type and extrapolation type.
\begin{definition}
An irregular grid point associated with a cross derivative is called disassociation type provided that the irregular grid point is also an
irregular grid point associated with an central derivative.
\end{definition}

The fictitious values on the disassociation type of  irregular grid points can be determined by the disassociation technique proposed in our earlier work  \cite{Zhou:2006d}.

As illustrated in Figure \ref{large_5_fic},  grid point $(i, j)$ is not only irregular in central derivatives but also irregular in cross derivatives. In this circumstance, fictitious value on  $(i, j)$ can be determined by methods presented in Section \ref{ordinary-derivative}. The obtained  fictitious value can be directly utilized for the discretization of cross derivatives as well.

\subsubsection{Extrapolation scheme}
If a grid point  is irregular in the 9-point stencil while is regular in the 5-point stencil, the disassociation scheme may not work.  In this case, there are two other options can be adopted to determine  its fictitious values (note that one irregular grid point has two fictitious values because of two elasticity equations). One approach  is extrapolation, which is easy to implement and its numerical accuracy can also be maintained as shown by extensive numerical tests.  The other approach is based on an iteratively matched interface method proposed in our earlier work \cite{Zhou:2006c}. In the present work, the extrapolation scheme is developed  to determine fictitious values for the approximation of cross derivatives.

\begin{figure}
\begin{center}
\begin{tabular}{ccc}
\includegraphics[width=0.333\textwidth]{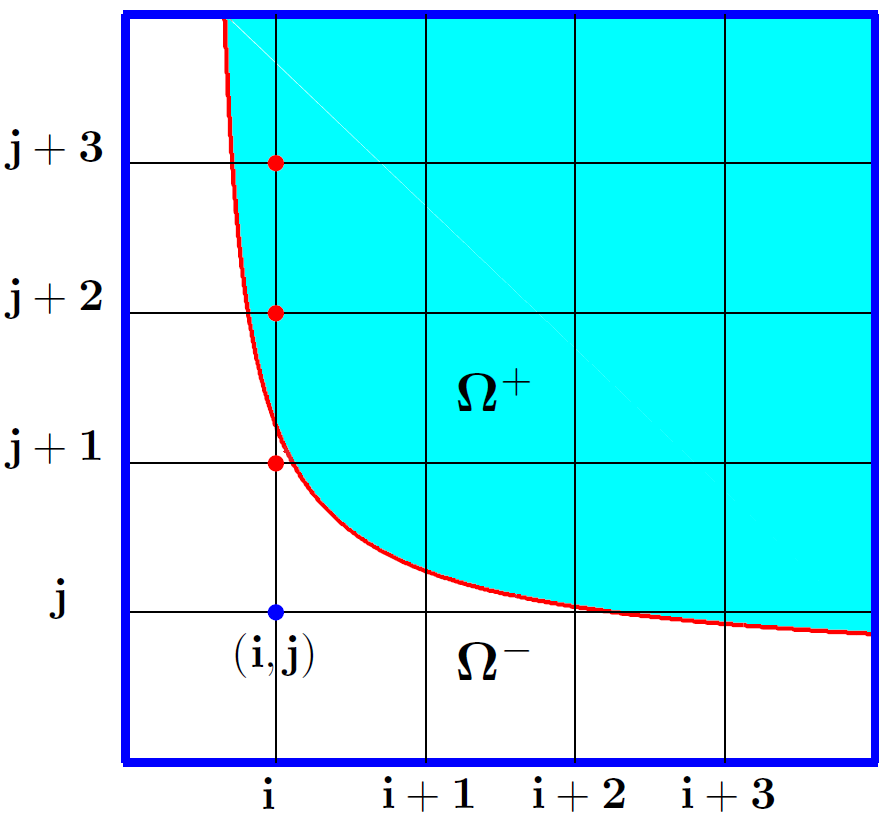}
\includegraphics[width=0.333\textwidth]{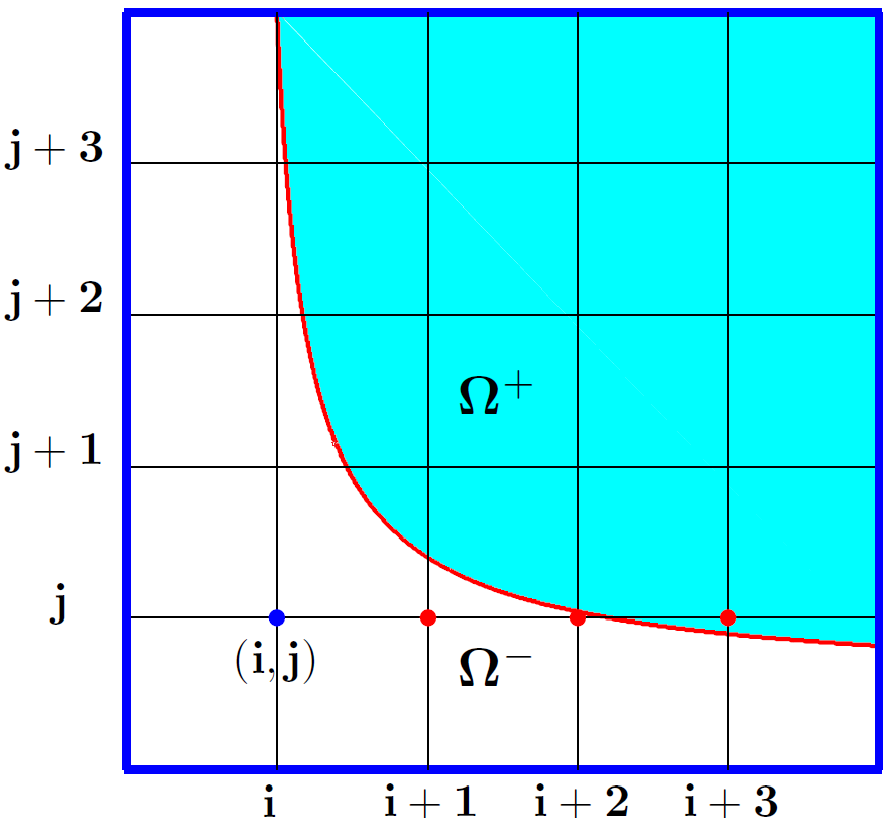}
\includegraphics[width=0.333\textwidth]{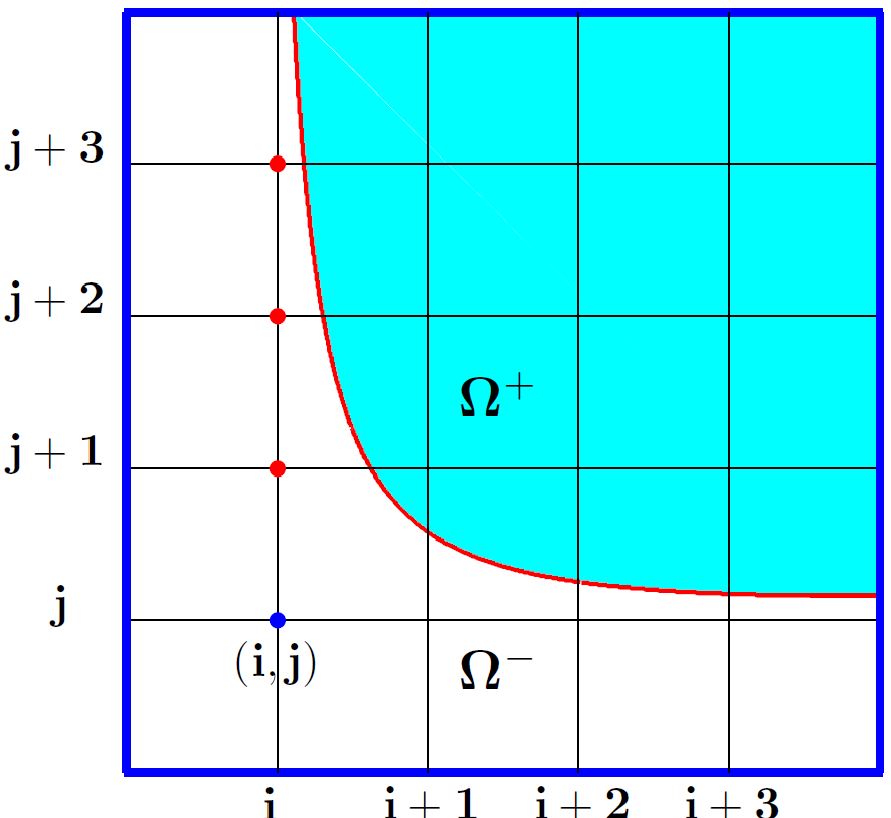}
\end{tabular}
\end{center}
\caption{Extrapolation type of irregular grid points in cross derivatives. In the left case, fictitious value at the bottom red point and function values at other two red points are employed to approximate fictitious value at $(i, j)$.  For the middle case, function values at the right-most red point and fictitious values at other two red points are utilized to extrapolate fictitious value at $(i, j)$. For the right case, fictitious values at three red points are used to approximate fictitious value at $(i, j)$.
}
\label{ext}
\end{figure}

In order to determine fictitious values at  irregular grid point $(i, j)$ for cross derivatives, we classify them into three types according to whether the function values or fictitious values are used in the extrapolation. As shown in Figures \ref{ext}, there are three types of extrapolation schemes.

\begin{itemize}
\item Scheme I. Two function values and one fictitious value are used for the extrapolation. For example, function values at $(i, j+2)$ and  $(i, j+3)$, fictitious values at $(i, j+1)$ are used to extrapolate fictitious values at $(i, j)$,  see the left chart of Fig. \ref{ext}.

\item Scheme II. One function value and two fictitious values are used for an extrapolation. For example, function values at $(i+3, j)$, fictitious values at $(i+1, j)$ and  $(i+2, j)$ used to extrapolate fictitious values at $(i, j)$,  see the middle chart of Fig. \ref{ext}.

\item Scheme III. Three fictitious values are used for an extrapolation. For example, fictitious values at grid points $(i, j+1)$, $(i, j+2)$ and  $(i, j+3)$ used to extrapolate fictitious values at $(i, j)$,  see the right chart of Fig. \ref{ext}.
\end{itemize}
In all of these schemes, extrapolations are carried out with the Lagrange polynomial.

\subsection{Governing equation discretization}
With the fictitious value represented by the function values at grid nodes and the interface conditions, we can construct the second order MIB schemes for the governing equations. The basic idea is to use fictitious values to replace the function values from the other subdomain. Since the three points interpolation or extrapolation schemes employed in matching the interface conditions, the obtained fictitious values guarantees the MIB scheme is of second order convergence. For instance, if we need to discretize the $\f{\partial^2 u_1}{\partial x \partial y}$ terms at a regular grid point $(i, j)$ in subdomain $\Omega^+$, the standard second order CFD schemes can be directly employed,
\begin{eqnarray}
\f{\partial^2 u_1}{\partial x \partial y}|_{(i, j)}=\f{1}{4h^2}(u_1(i+1, j+1)+u_1(i-1, j-1)-u_1(i-1, j+1)-u_1(i+1, j-1)).
\end{eqnarray}
However, if grid $(i, j)$ is irregular, and the nodes $(i-1, j-1)$ and $(i+1, j-1)$ are located on the other domain as depicted in Figure \ref{large_5_fic}, then the above scheme can be modified by the inclusion of the fictitious values,
\begin{eqnarray}
\f{\partial^2 u_1}{\partial x \partial y}|_{(i,j)}=\f{1}{4h^2}(u_1(i+1, j+1)+f^c_1(i-1, j-1)-f^c_1(i-1, j+1)-u_1(i+1, j-1)).
\end{eqnarray}

Similar MIB schemes can be applied for the discretization of other referred derivatives terms in the governing equations of the elasticity
interface problems (\ref{el1}-\ref{el2}) and (\ref{el12}-\ref{el22}).

\section{Numerical experiments} \label{validation}

In this section, the numerical accuracy, convergence and robustness  of the proposed second order MIB schemes for elasticity interface problems  are validated by four kinds of complex interfaces, namely, circle, ellipse, flower-like, and jigsaw-like interfaces, on rectangular domains. The first two interfaces involve  small curvatures, while the other two have   large curvatures.
The proposed numerical schemes were tested   for the piecewise constant material parameters and spatially dependent material parameters. Furthermore, to test the robustness of the present MIB method, we consider both large and small contrasts in the Poisson's ratio and shear modulus.

A standard  bi-conjugate gradient solver is used to solve the linear algebraic equations generated by the present MIB discretization. Numerical solutions are compared with the designed exact solutions.  Both $L_\infty$ and $L_2$ norm error measurements are employed in our tests and are defined as
$$
L_\infty(u_k)=\max\abs{u_k(i,j)-\tilde{u}_k(i,j)}, \quad k=1,2;  i=1,2,\cdots,n_x;  j=1,2,\cdots,n_y
$$
and
$$
L_2(u_k)=\sqrt{\f{1}{n_x*n_y}\sum_{j=1}^{n_y}\sum_{i=1}^{n_x}(u_k(i,j)-\tilde{u}_k(i,j))^2}, \quad k=1,2,
$$
where $u_k(i,j)$ are exact solutions and $\tilde{u}_k(i,j)$ are numerical solutions.

\subsection{Homogeneous media}
\subsubsection{Weak discontinuity}
In this section, various numerical tests are performed  for  cases where both Poisson's ratio and shear modulus are piecewise constant.

\paragraph{Example ~1.}
We first consider an ellipse interface defined by $x^2+4y^2=0.35^2$. In this example, the computational domain is set to $\Omega=[-0.5, 0.5]\times [-0.5, 0.5]$.   The Dirichlet boundary condition and interface conditions are determined from the following exact solution
$$
u_1(x, y)=
\left\{\begin{array}{ll}
xy+\sin(1+x^2+y^2)-3x^2+y^2, &\ \  \mbox{in}\ \ \Omega^+,\\
xy+\sin(1+x^2+y^2)-2x^2+5y^2-0.35^2, &\ \  \mbox{in}\ \ \Omega^-.
\end{array}\right.
$$
and
$$
u_2(x, y)=
\left\{\begin{array}{ll}
\cos(1+x^2-y^2)+5x^2y+x^2-y^2+2, &\ \  \mbox{in}\ \ \Omega^+,\\
\cos(1+x^2-y^2)+5x^2y+3x^2+7y^2+2(1-0.35^2), &\ \  \mbox{in}\ \ \Omega^-.
\end{array}\right.
$$

We consider three cases with different Poisson's ratio and  shear moduli.

\paragraph{Example ~1a.}
First, let us consider a example used in the literature \cite{YangXZ:2003}.
The Poisson's ratio and the shear modulus are, respectively,
$$
\nu=
\left\{\begin{array}{ll}
\nu^+=0.20, &\ \  \mbox{in}\ \ \Omega^+,\\
\nu^-=0.24, &\ \  \mbox{in}\ \ \Omega^-.
\end{array}\right.
$$
and
$$
\mu=
\left\{\begin{array}{ll}
\mu^+=1500000, &\ \  \mbox{in}\ \ \Omega^+,\\
\mu^-=2000000, &\ \  \mbox{in}\ \ \Omega^-.
\end{array}\right.
$$

Table \ref{mib_ex1} gives the grid refinement analysis and  Figure \ref{Fig.lable1} depicts our results. Obviously, the designed second order convergence is achieved for this elasticity interface problem. In the following, we further test the robustness of our method for handling large contrast between Poisson's ratio and shear modulus.

\begin{table}[!ht]
\centering
\caption{Numerical error and order for Example 1a.}
\begin{tabular}{lllllllll}
\cline{1-9}
$n_x\times n_y$ &$L_\infty(u_1)$&Order &$L_2(u_1)$&Order & $L_\infty(u_2)$&Order &$L_2(u_2)$ & Order  \\
\hline
$20\times 20$ &$4.40\times 10^{-4}$     &           &$2.44\times 10^{-4}$      &   &$2.15\times 10^{-4}$     &                 &$1.13\times 10^{-4}$      &\\
$40\times 40$ &$1.10\times 10^{-4}$     &2.00       &$6.06\times 10^{-5}$      &2.01  &$8.90\times 10^{-5}$     &1.37          &$4.14\times 10^{-5}$    &1.45\\
$80\times 80$ &$2.42\times 10^{-5}$     &2.18       &$1.31\times 10^{-5}$      &2.21  &$2.03\times 10^{-5}$     &2.13          &$9.31\times 10^{-6}$    &1.89\\
$160\times 160$ &$5.88\times 10^{-6}$     &2.04       &$3.19\times 10^{-6}$      &2.04  &$4.83\times 10^{-6}$     &2.07          &$2.28\times 10^{-6}$     &2.03\\
$160\times 160$ &$1.38\times 10^{-6}$     &2.09       &$7.80\times 10^{-7}$      &2.03  &$1.07\times 10^{-6}$     &2.17         &$5.20\times 10^{-7}$      &2.13\\
\hline
\end{tabular}
\label{mib_ex1}
\end{table}

\begin{figure}[!ht]
\centering
\subfigure[Numerical solution $u_1$]{
\label{Fig.sub.1}
\includegraphics[width=0.45\textwidth]{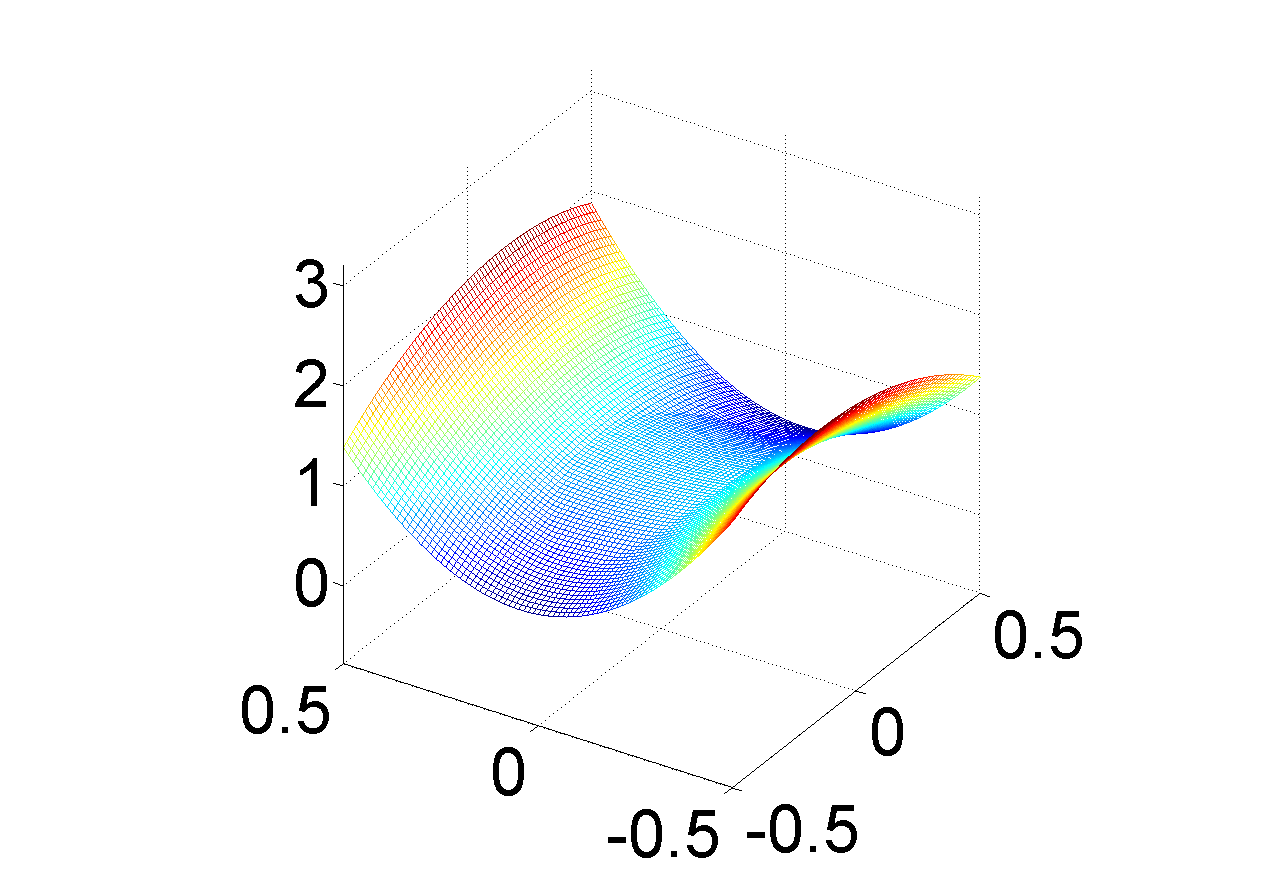}}
\subfigure[Error $u_1$]{
\label{Fig.sub.2}
\includegraphics[width=0.45\textwidth]{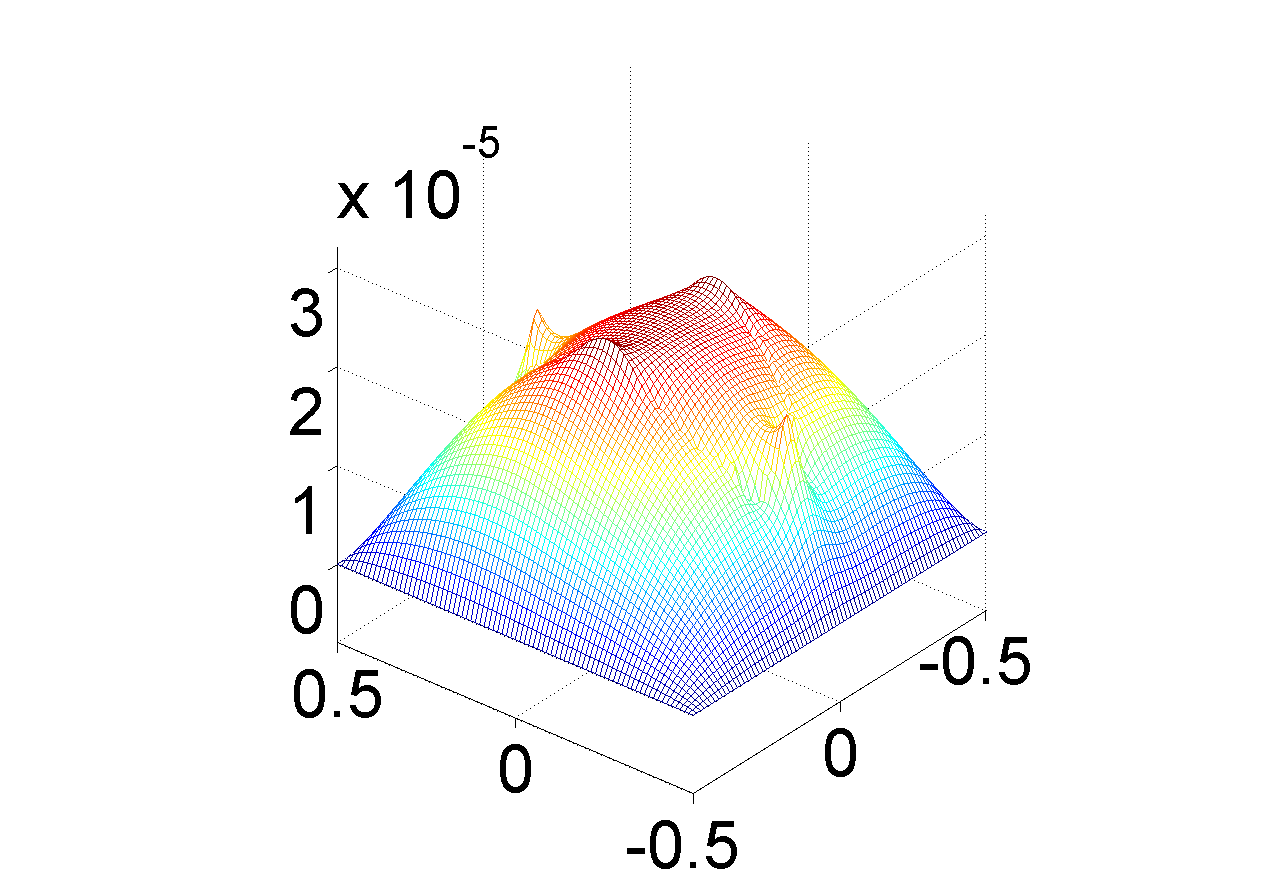}}
\subfigure[Numerical solution $u_2$]{
\label{Fig.sub.3}
\includegraphics[width=0.45\textwidth]{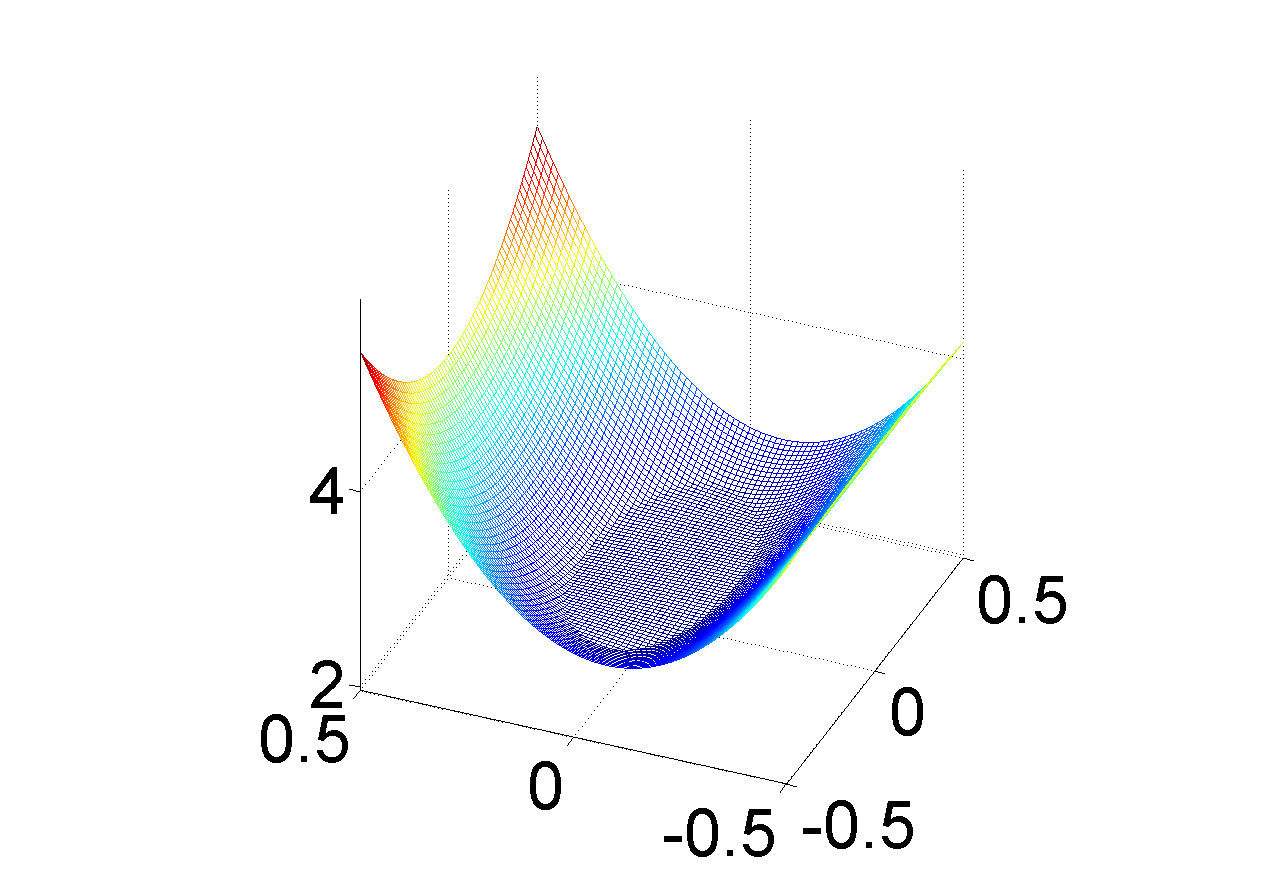}}
\subfigure[Error $u_2$]{
\label{Fig.sub.4}
\includegraphics[width=0.45\textwidth]{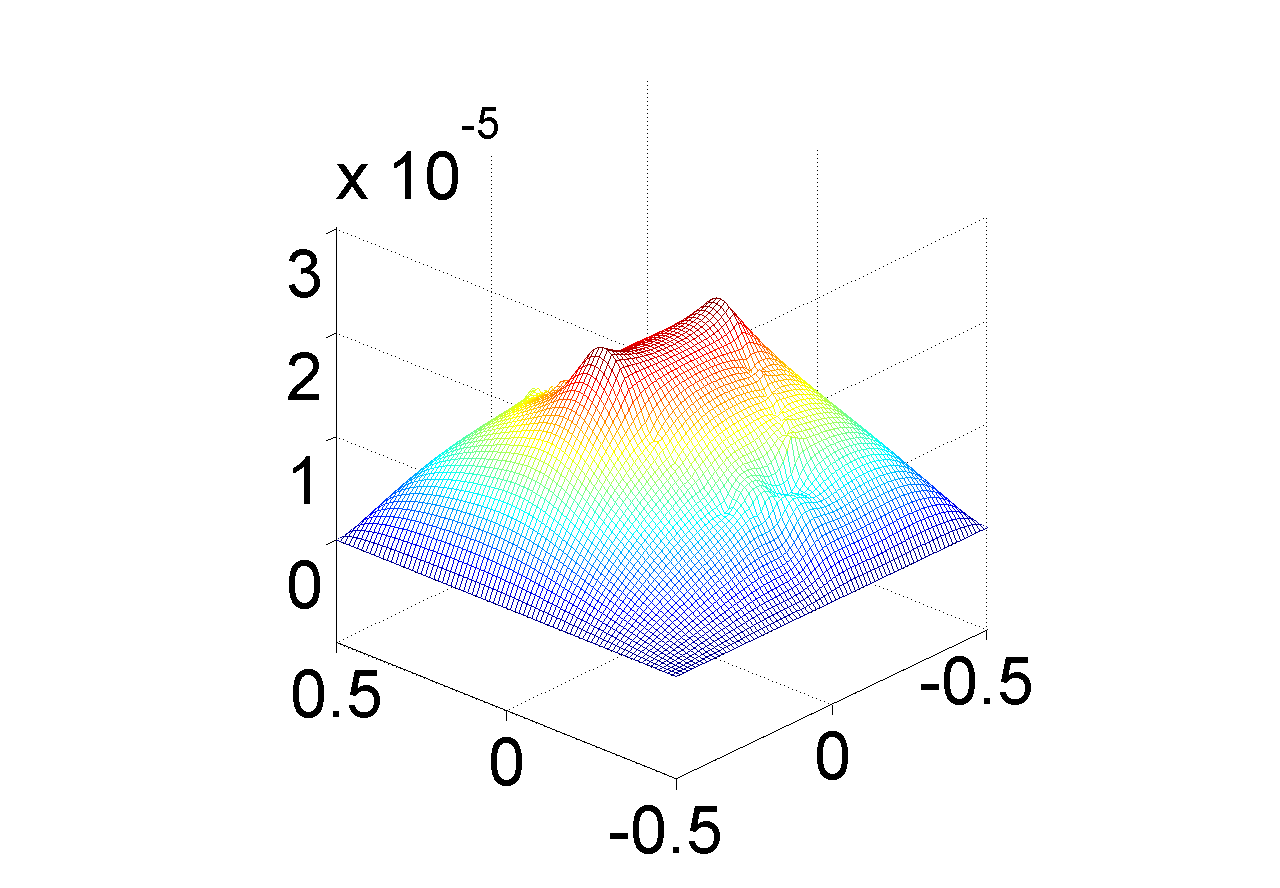}}
\caption{Numerical results for Example 1a on mesh $80\times 80$.}
\label{Fig.lable1}
\end{figure}

\paragraph{Example ~1b.}
We first increase the contrast between Poisson's ratios  in different subdomain to test the robustness of the present MIB scheme. To this end,
we keep shear modulus of the last case unchanged, while set the Poisson's ratio to be
$$
\nu=
\left\{\begin{array}{ll}
\nu^+=0.00024, &\ \  \mbox{in}\ \ \Omega^+,\\
\nu^-=0.24, &\ \  \mbox{in}\ \ \Omega^-.
\end{array}\right.
$$

Table \ref{ex1_large_por} displays the grid refinement analysis of the present numerical scheme.
 Obviously, the large contrast in the Poisson's ratios does not reduce the accuracy and convergent order of the present MIB method.
\begin{table}[!ht]
\centering
\caption{Numerical error and order for Example 1b with large contrast in Poisson's ratios.}
\begin{tabular}{lllllllll}
\cline{1-9}
$n_x\times n_y$ &$L_\infty(u_1)$&Order &$L_2(u_1)$&Order & $L_\infty(u_2)$&Order &$L_2(u_2)$ & Order  \\
\hline
$20\times 20$ &$3.84\times 10^{-4}$     &           &$2.19\times 10^{-4}$      &   &$1.89\times 10^{-4}$     &                 &$1.04\times 10^{-4}$      &\\
$40\times 40$ &$1.06\times 10^{-4}$     &1.86       &$5.70\times 10^{-5}$      &1.94  &$8.10\times 10^{-5}$     &1.24          &$3.75\times 10^{-5}$    &1.48\\
$80\times 80$ &$2.34\times 10^{-5}$     &2.18       &$1.24\times 10^{-5}$      &2.20  &$1.85\times 10^{-5}$     &2.13          &$8.48\times 10^{-6}$    &2.15\\
$160\times 160$ &$5.66\times 10^{-6}$     &2.05       &$3.01\times 10^{-6}$      &2.04  &$4.39\times 10^{-6}$     &2.18          &$2.08\times 10^{-6}$     &2.03\\
$320\times 320$ &$1.33\times 10^{-6}$     &2.09       &$7.40\times 10^{-7}$      &2.02  &$9.70\times 10^{-7}$     &2.24         &$4.80\times 10^{-7}$      &2.12\\
\hline
\end{tabular}
\label{ex1_large_por}
\end{table}

\paragraph{Example ~1c.}
We also analyze the performance of the proposed MIB method for large shear modulus contrast.
The Poisson's ratio is the same as that of Example 1a,  while  the shear modulus is set to
$$
\mu=
\left\{\begin{array}{ll}
\mu^+=2000, &\ \  \mbox{in}\ \ \Omega^+,\\
\mu^-=2000000, &\ \  \mbox{in}\ \ \Omega^-.
\end{array}\right.
$$
The grid refinement analysis for numerical error and order is shown in Table \ref{ex1_large_shr}. It is seen that the proposed MIB method
is very robust with respect to the large  contrast shear moduli.
\begin{table}[!ht]
\centering
\caption{Numerical error and order for Example 1c with large  shear modulus contrast.}
\begin{tabular}{lllllllll}
\cline{1-9}
$n_x\times n_y$ &$L_\infty(u_1)$&Order &$L_2(u_1)$&Order & $L_\infty(u_2)$&Order &$L_2(u_2)$ & Order  \\
\hline
$20\times 20$ &$2.87\times 10^{-4}$     &           &$1.55\times 10^{-4}$      &   &$2.51\times 10^{-4}$     &                 &$8.83\times 10^{-5}$      &\\
$40\times 40$ &$8.29\times 10^{-4}$     &1.79       &$4.12\times 10^{-5}$      &1.92  &$7.12\times 10^{-5}$     &1.82          &$3.06\times 10^{-5}$    &1.63\\
$80\times 80$ &$2.01\times 10^{-5}$     &2.04       &$9.50\times 10^{-6}$      &2.12  &$1.65\times 10^{-5}$     &2.11          &$7.38\times 10^{-6}$    &2.05\\
$160\times 160$ &$6.92\times 10^{-6}$     &1.55       &$2.89\times 10^{-6}$      &1.72  &$4.59\times 10^{-6}$     &1.85          &$1.83\times 10^{-6}$     &2.01\\
$320\times 320$ &$1.72\times 10^{-6}$     &2.01       &$6.10\times 10^{-7}$      &2.24  &$1.04\times 10^{-7}$     &2.14         &$4.20\times 10^{-7}$      &2.12\\
\hline
\end{tabular}
\label{ex1_large_shr}
\end{table}

\paragraph{Example 2.}
In this example, the computational domain is set to $\Omega=[-1, 1]\times [-1, 1]$.  The interface  is
 defined as circle $x^2+y^2=0.25$.

The Dirichlet boundary condition and the interface conditions are determined from the following designed exact solution
$$
u_1(x, y)=
\left\{\begin{array}{ll}
-r^2, &\ \  \mbox{in}\ \ \Omega^+,\\
-(r^4+c_0\log{(2r)})/10-r_0^2+(r_0^4+c_0\log{(2r_0)})/10, &\ \  \mbox{in}\ \ \Omega^-.
\end{array}\right.
$$
and
$$
u_2(x, y)=
\left\{\begin{array}{ll}
\log{(1+x^2+3y^2)}+\sin(xy), &\ \  \mbox{in}\ \ \Omega^+,\\
\log{(1+x^2+3y^2)}+\sin(xy)-4r^2+4r_0^2, &\ \  \mbox{in}\ \ \Omega^-.
\end{array}\right.
$$
where $r_0=0.5, c_0=-0.1$.

\paragraph{Example 2a.}
 We first consider  a standard test case used in the field \cite{YangXZ:2003}.
The Poisson's ratio and the shear modulus are, respectively,
$$
\nu=
\left\{\begin{array}{ll}
\nu^+=0.20, &\ \  \mbox{in}\ \ \Omega^+,\\
\nu^-=0.24, &\ \  \mbox{in}\ \ \Omega^-.
\end{array}\right.
$$
and
$$
\mu=
\left\{\begin{array}{ll}
\mu^+=2500000, &\ \  \mbox{in}\ \ \Omega^+,\\
\mu^-=3000000, &\ \  \mbox{in}\ \ \Omega^-.
\end{array}\right.
$$

\begin{table}[!ht]
\centering
\caption{Numerical error and order for Example 2a.}
\begin{tabular}{lllllllll}
\cline{1-9}
$n_x\times n_y$ &$L_\infty(u_1)$&Order &$L_2(u_1)$&Order & $L_\infty(u_2)$&Order &$L_2(u_2)$ & Order  \\
\hline
$20\times 20$ &$3.10\times 10^{-3}$     &           &$1.23\times 10^{-3}$      &      &$8.97\times 10^{-2}$     &              &$3.51\times 10^{-3}$      &\\
$40\times 40$ &$8.62\times 10^{-4}$     &1.85       &$2.98\times 10^{-4}$      &2.04  &$1.04\times 10^{-3}$     &3.09          &$3.68\times 10^{-4}$     &3.10\\
$80\times 80$ &$1.90\times 10^{-4}$     &2.18       &$7.61\times 10^{-5}$      &1.97  &$2.27\times 10^{-4}$     &2.20          &$8.27\times 10^{-4}$     &2.15\\
$160\times 160$ &$4.39\times 10^{-5}$     &2.11       &$1.91\times 10^{-5}$      &1.95  &$6.04\times 10^{-5}$     &1.91          &$2.01\times 10^{-5}$     &2.03\\
$320\times 320$ &$1.26\times 10^{-5}$     &1.80       &$4.96\times 10^{-6}$      &1.95  &$2.04\times 10^{-5}$     &1.57          &$7.10\times 10^{-6}$     &1.53\\
\hline
\end{tabular}
\label{ex2_mib}
\end{table}

Table \ref{ex2_mib} gives the grid refinement analysis. Figure \ref{Fig.lable2} plots our numerical  results. Essentially, the designed order and accuracy are obtained.  The slight order reduction at the last mesh was due to slow varying nature of the solution over the computational domain. This reduction can be eliminated when some perturbation is introduced as shown in the next numerical example.

\begin{figure}[!ht]
\centering
\subfigure[Numerical solution $u_1$]{
\label{Fig.sub.21}
\includegraphics[width=0.45\textwidth]{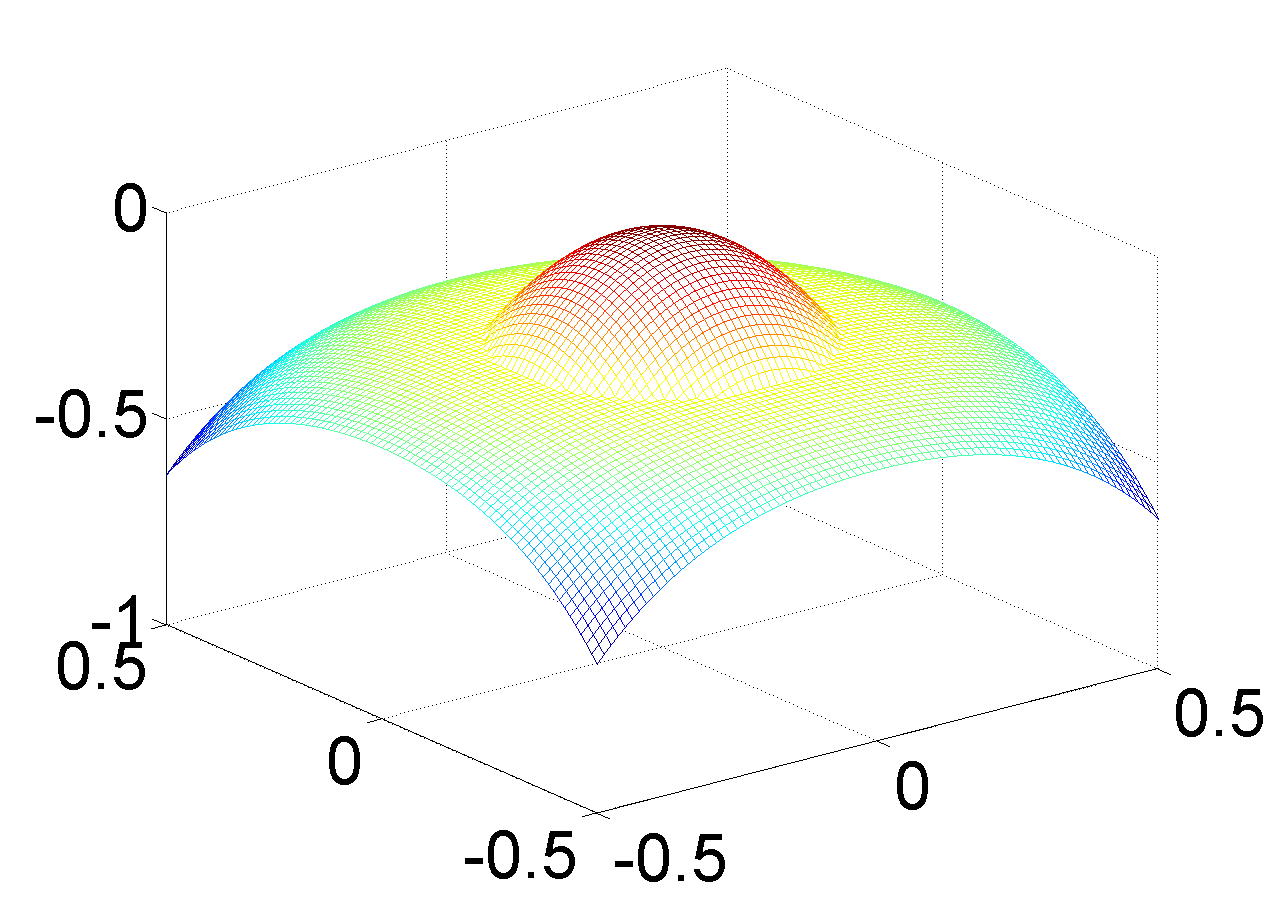}}
\subfigure[Error $u_1$]{
\label{Fig.sub.22}
\includegraphics[width=0.45\textwidth]{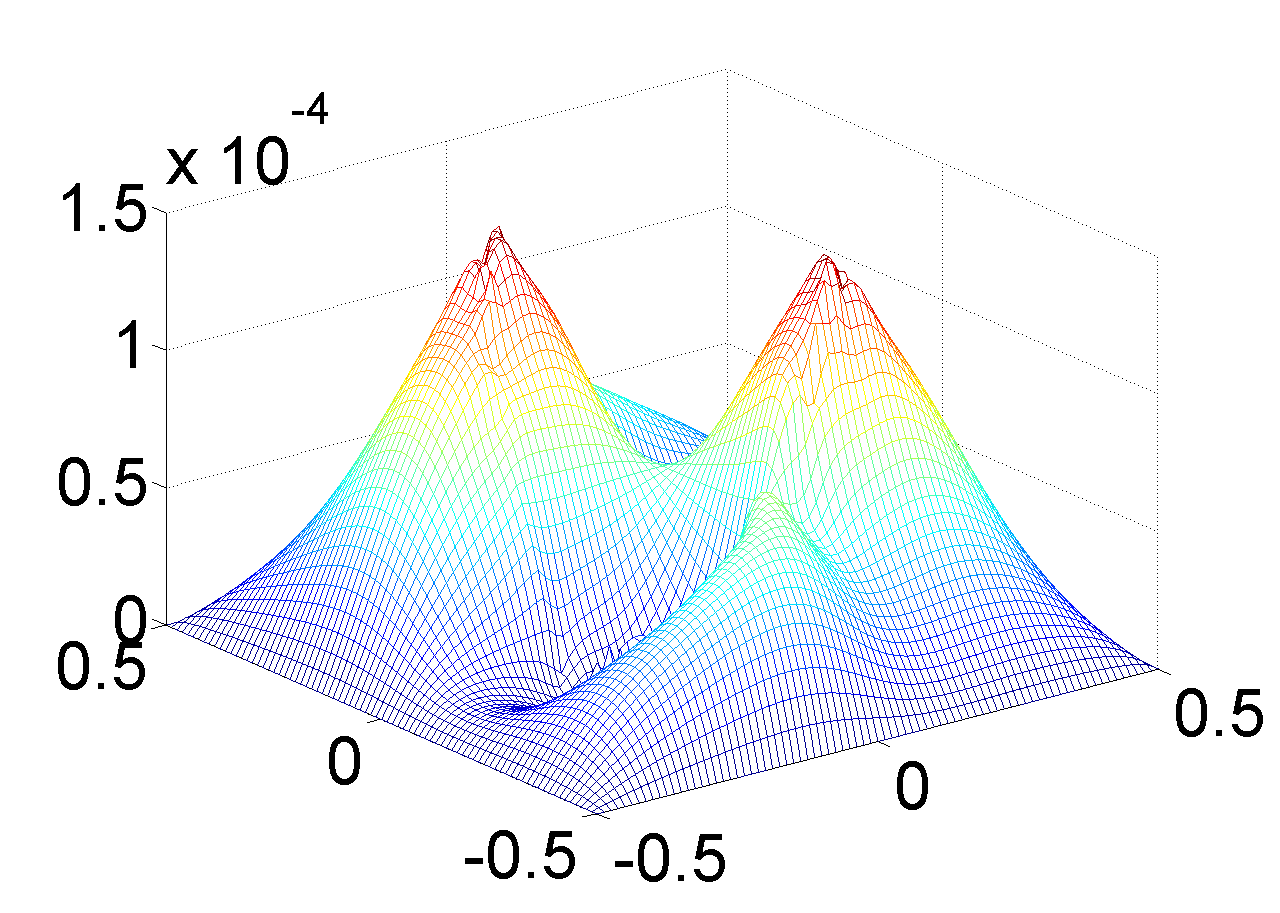}}
\subfigure[Numerical solution $u_2$]{
\label{Fig.sub.23}
\includegraphics[width=0.45\textwidth]{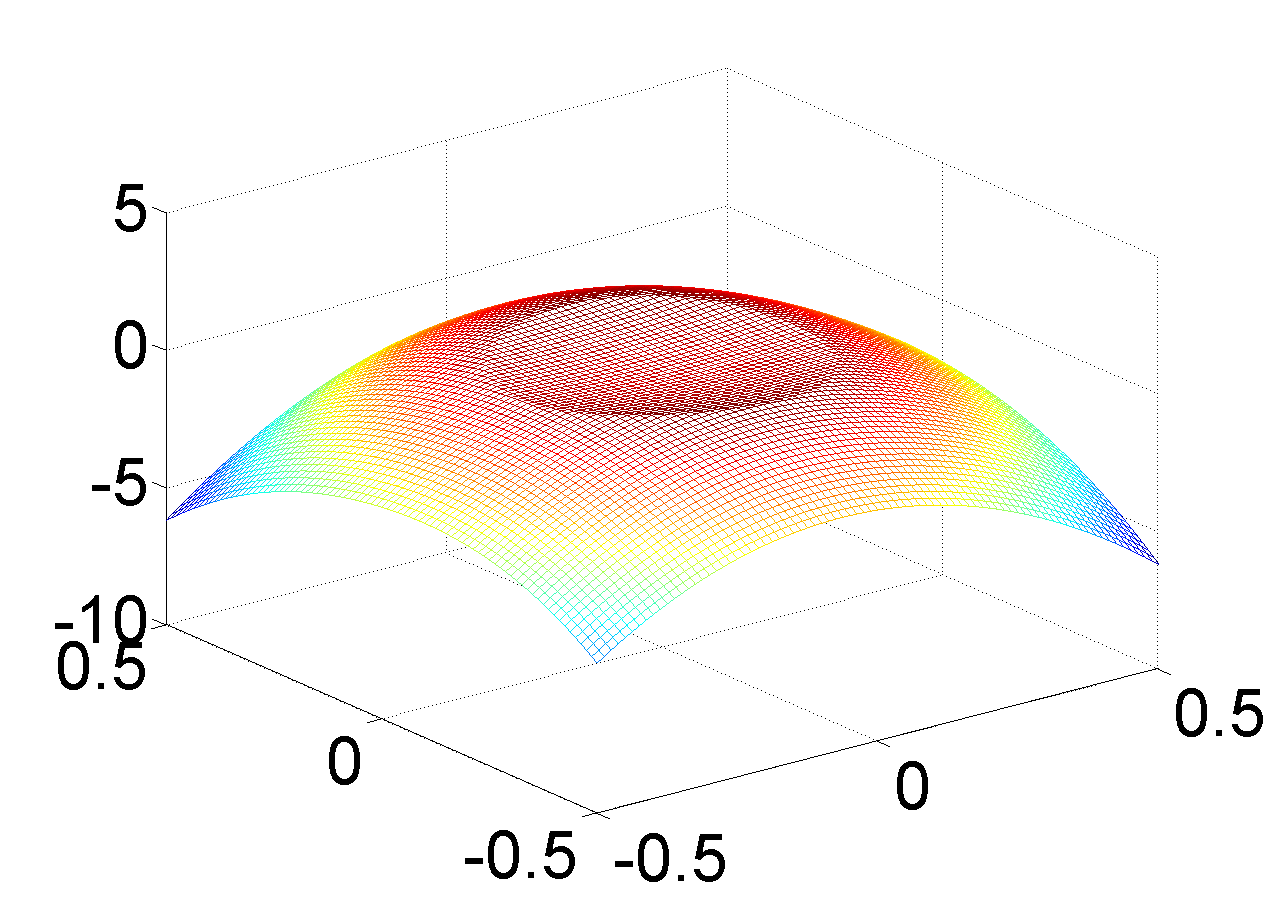}}
\subfigure[Error $u_2$]{
\label{Fig.sub.24}
\includegraphics[width=0.45\textwidth]{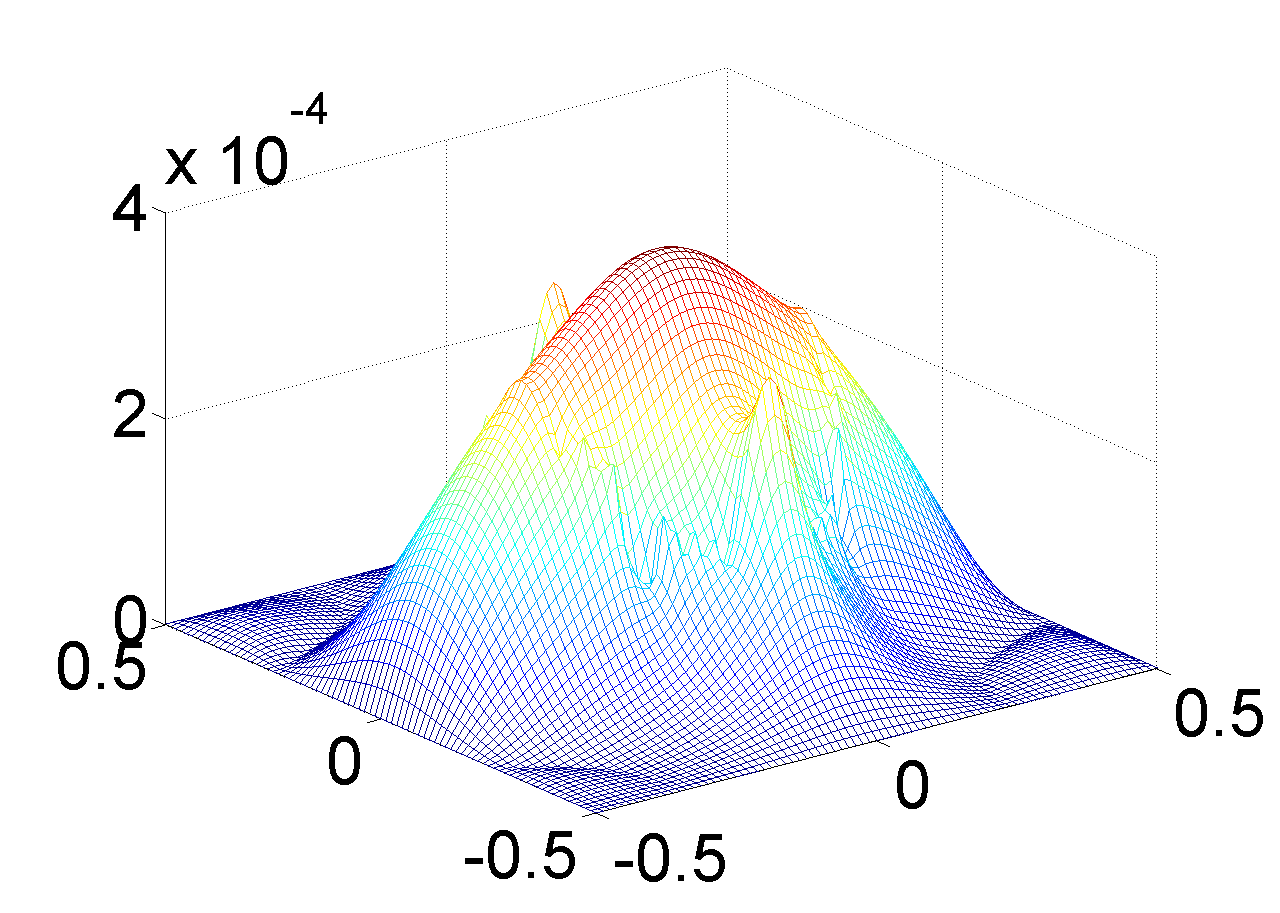}}
\caption{Numerical results for Example 2a on mesh $80\times 80$.}
\label{Fig.lable2}
\end{figure}

\paragraph{Example 2b.}
It is important to know whether the proposed method is robust for large contrast in Poisson's ratio and shear modulus.
We consider a change in the Poisson's ratio while keep the shear modulus given in Example 2a
$$
\nu=
\left\{\begin{array}{ll}
\nu^+=0.00024, &\ \  \mbox{in}\ \ \Omega^+,\\
\nu^-=0.24, &\ \  \mbox{in}\ \ \Omega^-.
\end{array}\right.
$$
Table \ref{ex2_large_por} gives the grid refinement analysis of the numerical scheme. It is seen that the MIB method is robust for large contrast in  Poisson's ratios.

\begin{table}[!ht]
\caption{Numerical error and order for Example 2b with large  Poisson's ratio contrast.}
\centering
\begin{tabular}{lllllllll}
\cline{1-9}
$n_x\times n_y$ &$L_\infty(u_1)$&Order &$L_2(u_1)$&Order &$L_\infty(u_2)$&Order &$L_2(u_2)$ & Order  \\
\hline
$20\times 20$ &$3.39\times 10^{-3}$     &           &$1.35\times 10^{-3}$      &      &$9.53\times 10^{-2}$     &              &$3.78\times 10^{-3}$      &\\
$40\times 40$ &$8.92\times 10^{-4}$     &1.92       &$3.04\times 10^{-4}$      &2.15  &$9.37\times 10^{-3}$     &3.35          &$2.91\times 10^{-4}$    &3.70\\
$80\times 80$ &$1.99\times 10^{-4}$     &2.16       &$7.61\times 10^{-5}$      &2.00  &$2.22\times 10^{-4}$     &2.08          &$7.59\times 10^{-4}$    &1.94\\
$160\times 160$ &$4.79\times 10^{-5}$     &2.00       &$1.96\times 10^{-5}$      &1.96  &$6.95\times 10^{-5}$     &1.68          &$2.36\times 10^{-5}$     &1.69\\
$320\times 320$ &$1.34\times 10^{-5}$     &1.89       &$4.88\times 10^{-6}$      &2.01  &$1.73\times 10^{-5}$     &2.01         &$5.93\times 10^{-6}$      &1.99\\
\hline
\end{tabular}\label{ex2_large_por}
\end{table}

\paragraph{Example 2c.}
Finally, we consider a large shear modulus contrast. The Poisson's ratio is the same as Example 2a, while shear modulus is given by
$$
\mu=
\left\{\begin{array}{ll}
\mu^+=3000, &\ \  \mbox{in}\ \ \Omega^+,\\
\mu^-=3000000, &\ \  \mbox{in}\ \ \Omega^-.
\end{array}\right.
$$
The grid refinement analysis for the error and order is shown in Table  \ref{ex2_large_shr}. The second order convergence is maintained.
\begin{table}[!ht]
\centering
\caption{Numerical error and order for Example 2c with large shear modulus contrast.}
\begin{tabular}{lllllllll}
\cline{1-9}
$n_x\times n_y$ &$L_\infty(u_1)$&Order &$L_2(u_1)$&Order &$L_\infty(u_2)$&Order &$L_2(u_2)$ & Order  \\
\hline
$20\times 20$ &$5.99\times 10^{-3}$     &           &$1.60\times 10^{-3}$      &      &$9.90\times 10^{-3}$     &              &$3.21\times 10^{-3}$      &\\
$40\times 40$ &$1.45\times 10^{-3}$     &2.05       &$4.77\times 10^{-4}$      &1.75  &$2.04\time  10^{-3}$     &2.28          &$4.37\times 10^{-4}$     &2.88\\
$80\times 80$ &$3.66\times 10^{-4}$     &1.97       &$1.42\times 10^{-5}$      &1.75  &$5.41\times 10^{-4}$     &1.91          &$1.12\times 10^{-4}$     &1.96\\
$160\times 160$&$1.04\times 10^{-4}$     &1.99       &$2.83\times 10^{-5}$      &2.32  &$1.48\times 10^{-5}$     &1.87          &$4.26\times 10^{-5}$     &1.40\\
$320\times 320$&$2.50\times 10^{-5}$     &2.06       &$7.61\times 10^{-6}$      &1.89  &$3.15\times 10^{-5}$     &2.23         &$5.40\times 10^{-6}$      &2.98\\
\hline
\end{tabular}\label{ex2_large_shr}
\end{table}

\paragraph{Example 3.}
In this case, we consider a more complex interface to demonstrate the performance of the MIB method. The interface is of a flower shape and is defined in the polar coordinate $$r=0.5+\frac{\sin5\theta}{7}.$$
We set the computational domain $\Omega=[-1, 1]\times [-1, 1]$.
The Dirichlet boundary condition and the interface conditions are determined from the following  exact solution
$$
u_1(x, y)=
\left\{\begin{array}{ll}
\exp{(-3.5^2(x^2+y^2)^5)}, &\ \  \mbox{in}\ \ \Omega^+,\\
\exp{(-(7(x^2+y^2)^3-5x^4y+10x^2y^3-y^5)^2)}, &\ \  \mbox{in}\ \ \Omega^-.
\end{array}\right.
$$
and
$$
u_2(x, y)=
\left\{\begin{array}{ll}
\exp{(-3.5^2(x^2+y^2)^5)}+xy, &\ \  \mbox{in}\ \ \Omega^+,\\
\exp{(-(7(x^2+y^2)^3-5x^4y+10x^2y^3-y^5)^2)}+xy, &\ \  \mbox{in}\ \ \Omega^-.
\end{array}\right.
$$
We consider two cases for this problem below.

\paragraph{Example 3a.} We first set the Poisson's ratio and the shear as the follows
$$
\nu=
\left\{\begin{array}{ll}
\nu^+=0.20, &\ \  \mbox{in}\ \ \Omega^+,\\
\nu^-=0.24, &\ \  \mbox{in}\ \ \Omega^-.
\end{array}\right.
$$
and
$$
\mu=
\left\{\begin{array}{ll}
\mu^+=1500000, &\ \  \mbox{in}\ \ \Omega^+,\\
\mu^-=2000000, &\ \  \mbox{in}\ \ \Omega^-.
\end{array}\right.
$$

\begin{table}[!ht]
\centering
\caption{Numerical error and order for Example 3a.}
\begin{tabular}{lllllllll}
\cline{1-9}
$n_x\times n_y$ &$L_\infty(u_1)$&Order &$L_2(u_1)$&Order &$L_\infty(u_2)$&Order &$L_2(u_2)$ & Order  \\
\hline
$20\times 20$ &$2.83\times 10^{-2}$     &           &$8.92\times 10^{-3}$      &   &$2.66\times 10^{-2}$     &                 &$5.86\times 10^{-3}$      &\\
$40\times 40$ &$5.10\times 10^{-3}$     &2.47       &$1.71\times 10^{-3}$      &2.38  &$5.58\times 10^{-3}$     &2.25          &$1.76\times 10^{-3}$    &1.74\\
$80\times 80$ &$2.08\times 10^{-3}$     &1.30       &$6.54\times 10^{-4}$      &1.39  &$2.29\times 10^{-3}$     &1.29          &$6.83\times 10^{-4}$    &1.37\\
$160\times 160$ &$5.09\times 10^{-4}$     &2.00       &$1.72\times 10^{-4}$      &1.93  &$4.99\times 10^{-4}$     &2.20          &$1.66\times 10^{-4}$     &2.04\\
$320\times 320$ &$1.26\times 10^{-4}$     &2.01       &$4.21\times 10^{-5}$      &2.03  &$1.08\times 10^{-4}$     &2.18         &$3.62\times 10^{-5}$      &2.20\\
\hline
\end{tabular}
\label{flower_err}
\end{table}

The grid refinement analysis is listed in Table \ref{flower_err}. Figure \ref{Fig.lable3} demonstrates our numerical results.
Clearly, the second order convergence is obtained for this irregular interface.

\begin{figure}[!ht]
\centering
\subfigure[Numerical Solution $u_1$]{
\label{Fig.sub.31}
\includegraphics[width=0.45\textwidth]{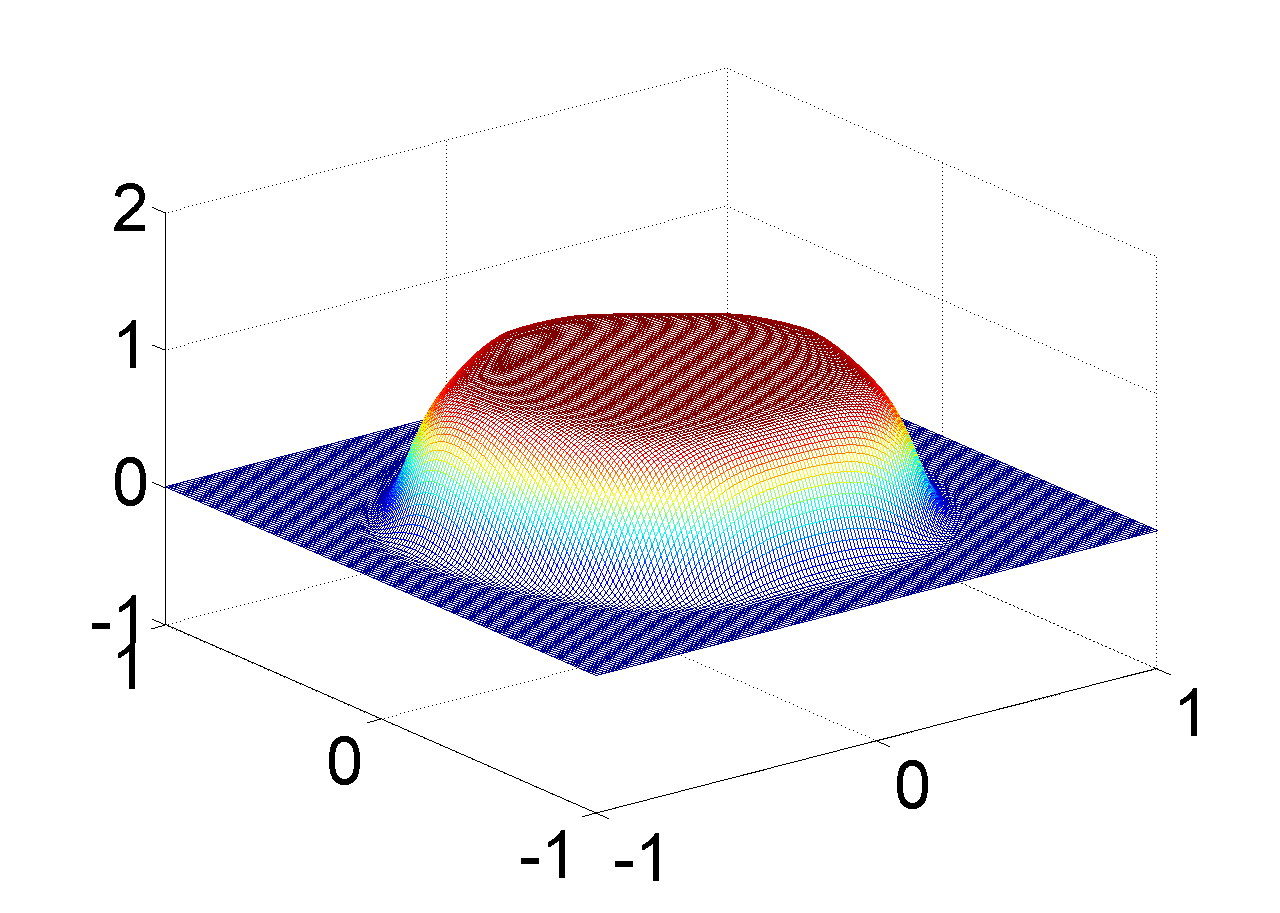}}
\subfigure[Error $u_1$]{
\label{Fig.sub.32}
\includegraphics[width=0.45\textwidth]{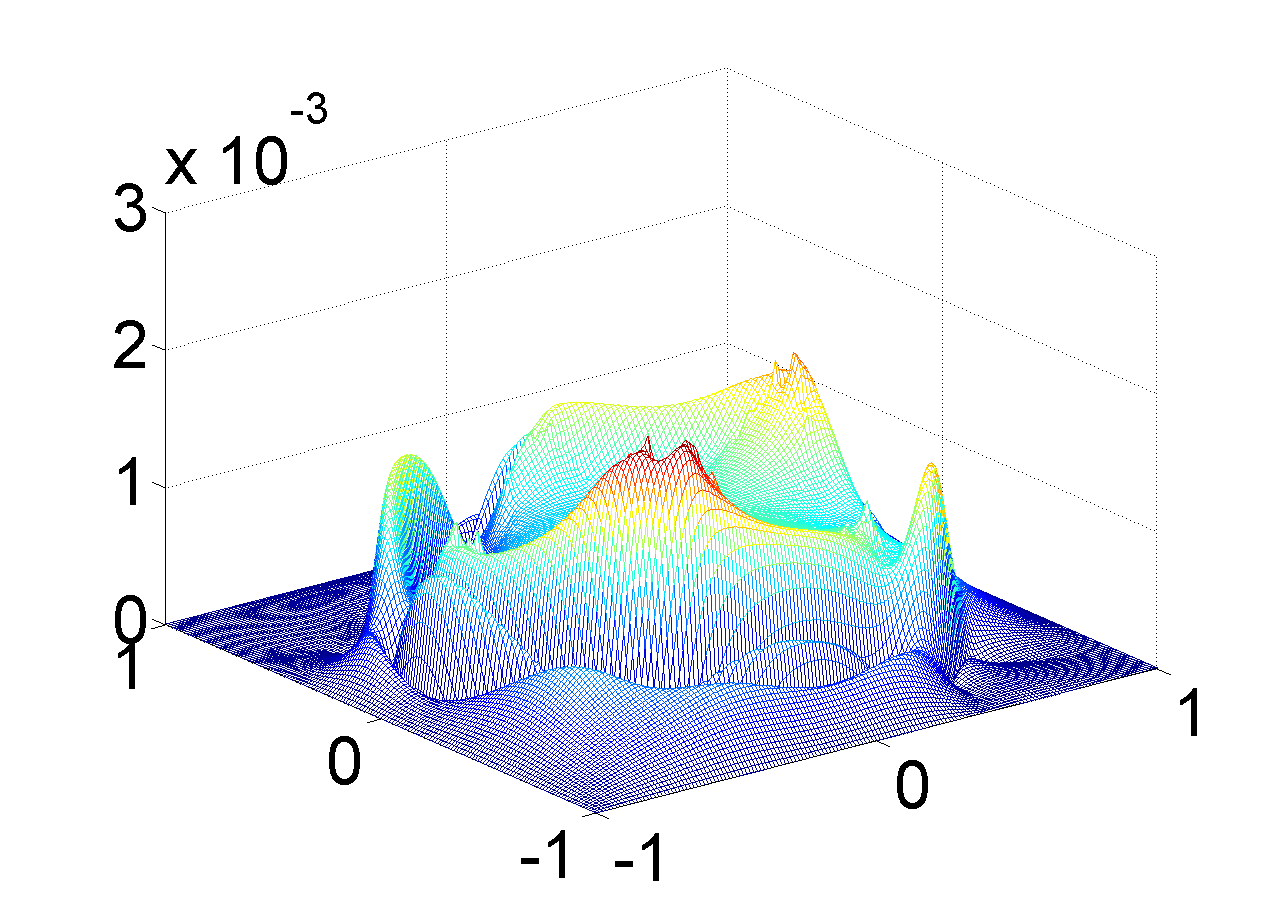}}
\subfigure[Numerical Solution $u_2$]{
\label{Fig.sub.33}
\includegraphics[width=0.45\textwidth]{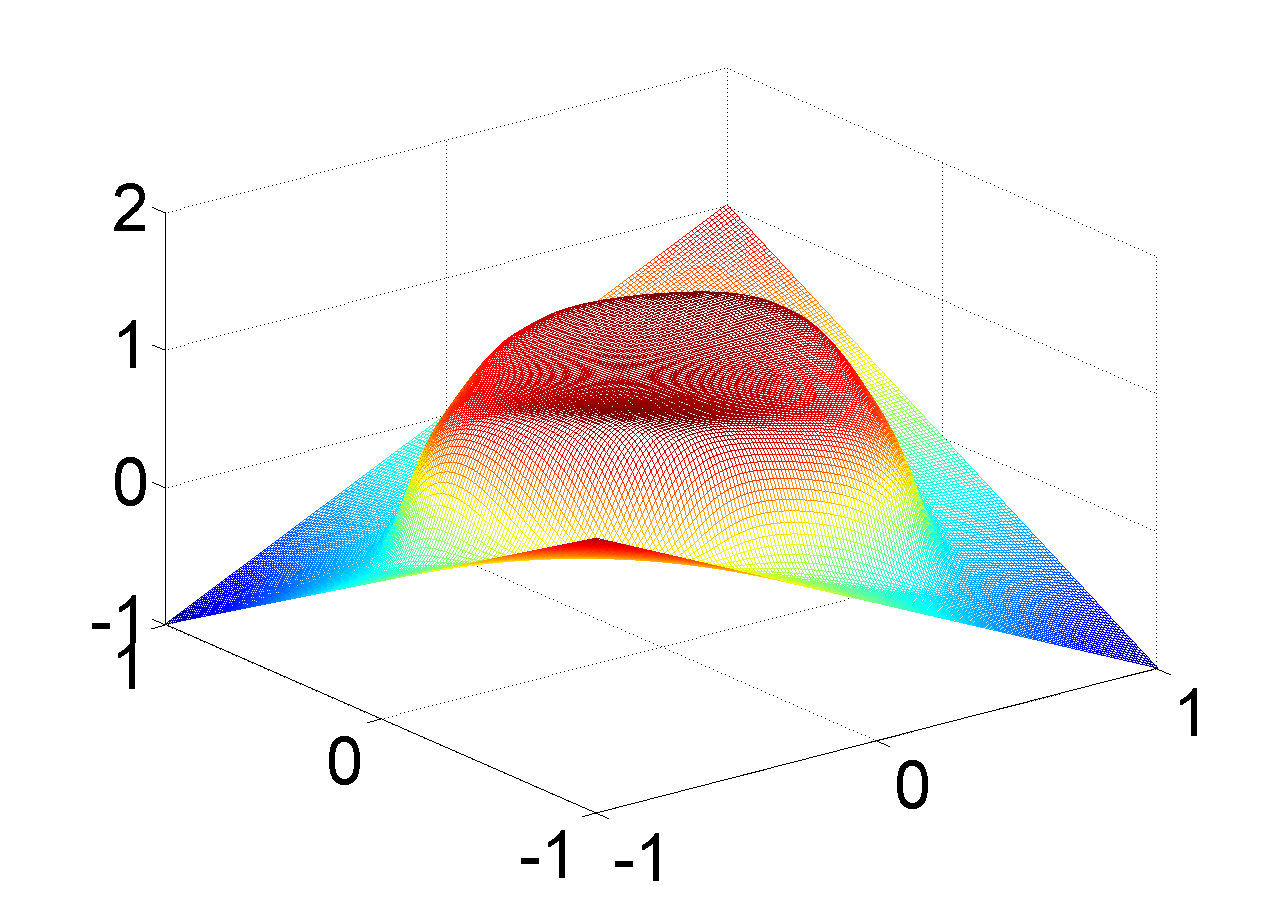}}
\subfigure[Error $u_2$]{
\label{Fig.sub.34}
\includegraphics[width=0.45\textwidth]{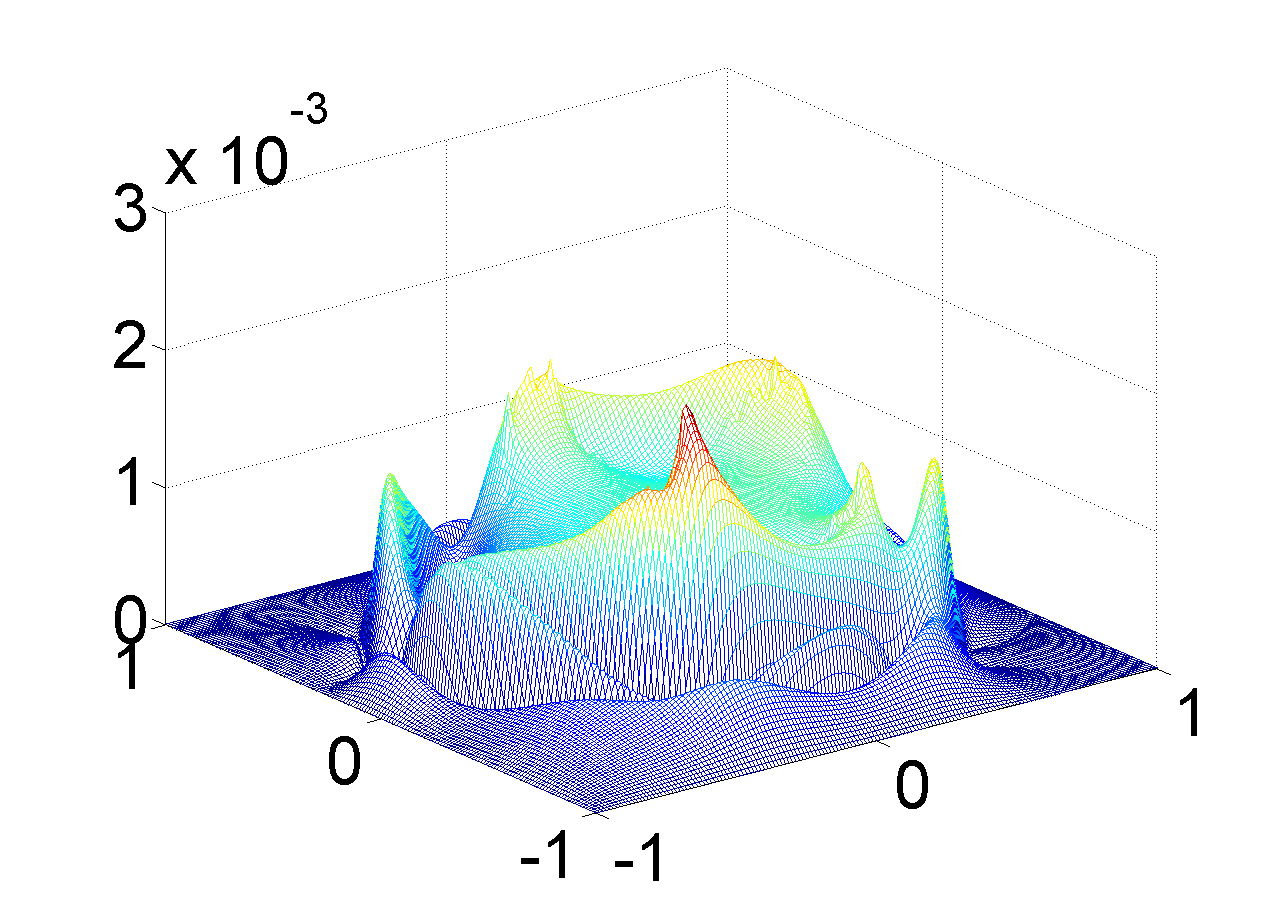}}
\caption{Numerical results for Example 3a on mesh $80\times 80$.}
\label{Fig.lable3}
\end{figure}

\paragraph{Example 3b.} We next test the robustness of the present method for large Poisson's ratios.
We keep the shear modulus the same as that in Example 3a, while change Poisson's ratios to
$$
\nu=
\left\{\begin{array}{ll}
\nu^+=0.00024, &\ \  \mbox{in}\ \ \Omega^+,\\
\nu^-=0.24, &\ \  \mbox{in}\ \ \Omega^-.
\end{array}\right.
$$

Table \ref{ex3_large_por} presents the grid refinement analysis of the numerical scheme.
Again, we see the designed second order convergence in both $L_\infty$ and $L_2$ errors.

\begin{table}[!ht]
\centering
\caption{Numerical error and order for Example 3b with large contrast in Poisson's ratios.}
\begin{tabular}{lllllllll}
\cline{1-9}
$n_x\times n_y$ &$L_\infty(u_1)$&Order &$L_2(u_1)$&Order & $L_\infty(u_2)$&Order &$L_2(u_2)$ & Order  \\
\hline
$20\times 20$ &$3.22\times 10^{-2}$     &           &$1.04\times 10^{-2}$      &   &$2.46\times 10^{-2}$     &                 &$7.17\times 10^{-3}$      &\\
$40\times 40$ &$6.48\times 10^{-3}$     &2.31       &$2.15\times 10^{-3}$      &2.27  &$6.22\times 10^{-3}$     &1.98          &$2.20\times 10^{-3}$    &1.71\\
$80\times 80$ &$2.48\times 10^{-3}$     &1.39       &$7.70\times 10^{-4}$      &1.48  &$2.60\times 10^{-3}$     &1.26          &$7.92\times 10^{-4}$    &1.48\\
$160\times 160$ &$6.01\times 10^{-4}$     &2.04       &$1.97\times 10^{-4}$      &1.97  &$5.75\times 10^{-4}$     &2.18          &$1.90\times 10^{-4}$     &2.06\\
$320\times 320$ &$1.40\times 10^{-4}$     &2.10       &$4.78\times 10^{-5}$      &2.04  &$1.26\times 10^{-4}$     &2.19         &$4.27\times 10^{-5}$      &2.15\\
\hline
\end{tabular}\label{ex3_large_por}
\end{table}

\subsubsection{Strong discontinuity}
\begin{remark}
The above numerical examples verify that the proposed MIB scheme is essentially of second order convergence for the weak discontinuity case. Now we turn to verify the efficiency and robustness for the strong discontinuity scenario.
\end{remark}

\paragraph{Example 4.}
To further examine our method   for complicated  interface geometry, we reconsider the exact solution defined in Example 1, while  change the interface to the flower-like pattern as defined in Example 3.

\begin{table}[!ht]
\centering
\caption{Numerical error and order for Example 4.}
\begin{tabular}{lllllllll}
\cline{1-9}
$n_x\times n_y$ &$L_\infty(u_1)$&Order &$L_2(u_1)$&Order & $L_\infty(u_2)$&Order &$L_2(u_2)$ & Order  \\
\hline
$20\times 20$ &$1.38\times 10^{-3}$     &           &$6.42\times 10^{-4}$      &   &$1.55\times 10^{-3}$     &                 &$7.17\times 10^{-4}$      &\\
$40\times 40$ &$3.20\times 10^{-4}$     &2.11       &$1.67\times 10^{-4}$      &2.01  &$2.64\times 10^{-4}$     &2.55          &$1.32\times 10^{-4}$    &2.44\\
$80\times 80$ &$6.35\times 10^{-5}$     &2.33       &$3.06\times 10^{-5}$      &1.94  &$5.89\times 10^{-5}$     &2.16          &$3.00\times 10^{-5}$    &2.14\\
$160\times 160$ &$1.50\times 10^{-5}$     &2.08       &$7.15\times 10^{-6}$      &2.10  &$1.51\times 10^{-6}$     &1.96          &$7.69\times 10^{-6}$     &1.96\\
$320\times 320$ &$3.53\times 10^{-6}$     &2.09       &$1.82\times 10^{-6}$      &1.92  &$4.00\times 10^{-7}$     &1.92         &$2.08\times 10^{-6}$      &1.89\\
\hline
\end{tabular}
\label{dis_ex4}
\end{table}

Table (\ref{dis_ex4}) gives the grid refinement analysis. Results are also depicted in \ref{Fig.lable4}.
Essentially, the designed order of convergence is maintained.

\begin{figure}[!ht]
\centering
\subfigure[Numerical solution $u_1$]{
\label{Fig.sub.41}
\includegraphics[width=0.45\textwidth]{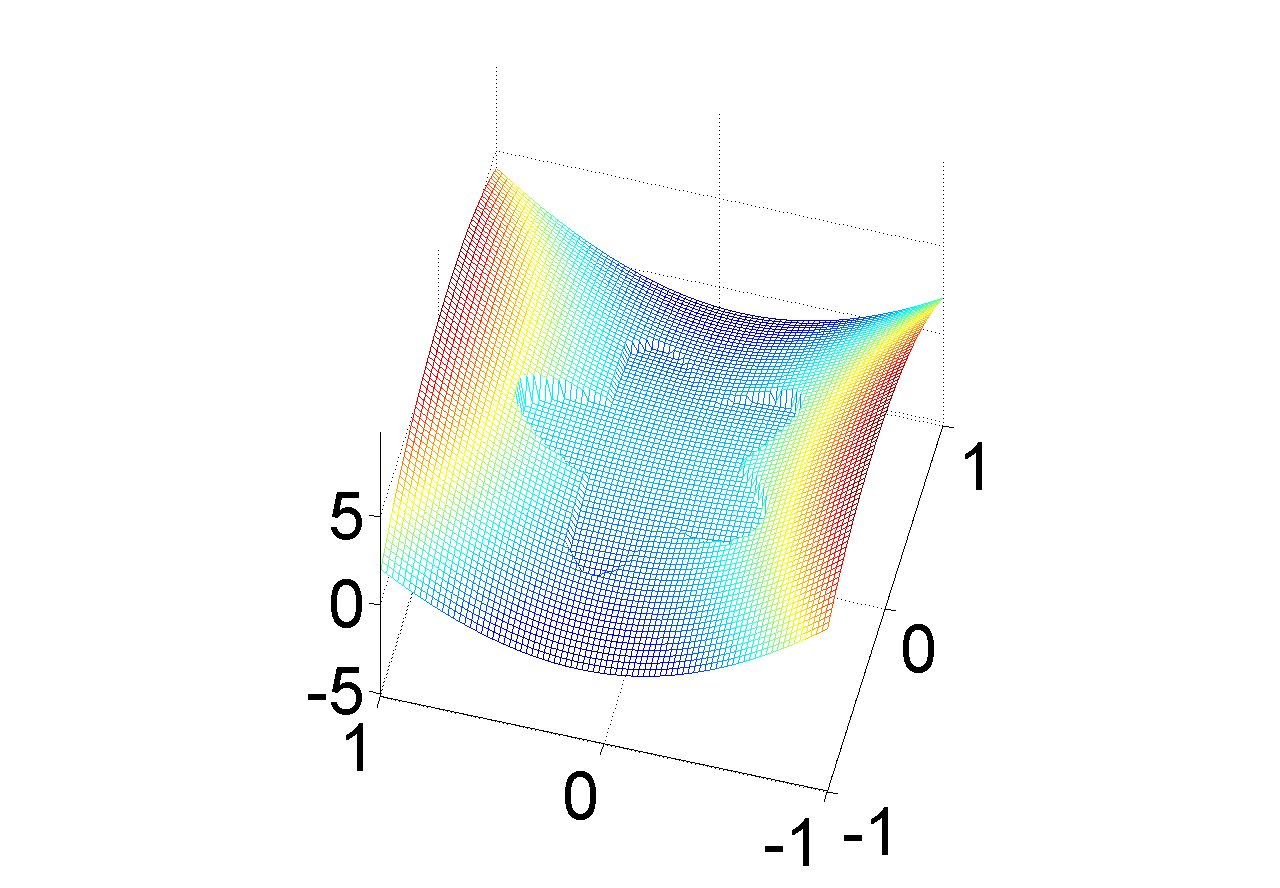}}
\subfigure[Error $u_1$]{
\label{Fig.sub.42}
\includegraphics[width=0.45\textwidth]{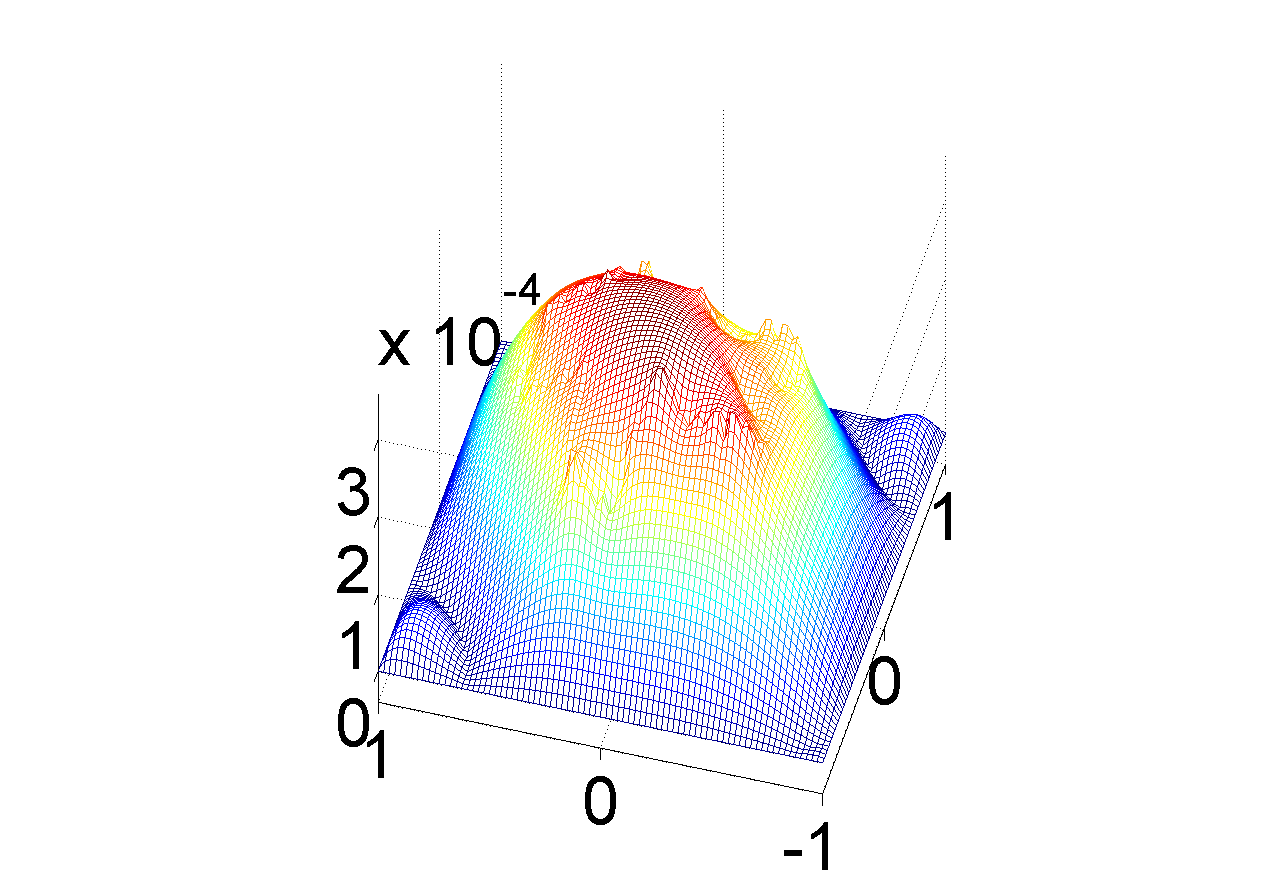}}
\subfigure[Numerical solution $u_2$]{
\label{Fig.sub.43}
\includegraphics[width=0.45\textwidth]{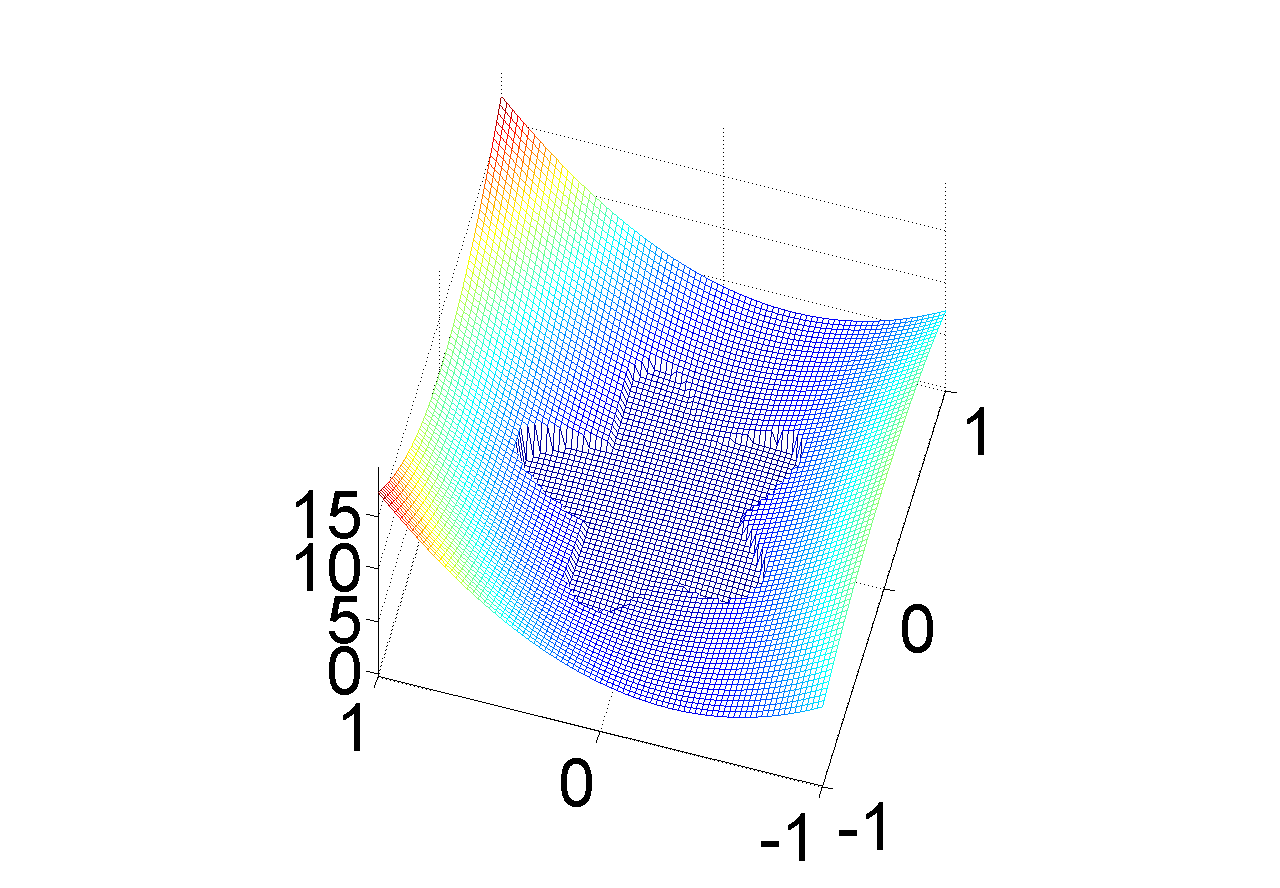}}
\subfigure[Error $u_2$]{
\label{Fig.sub.44}
\includegraphics[width=0.45\textwidth]{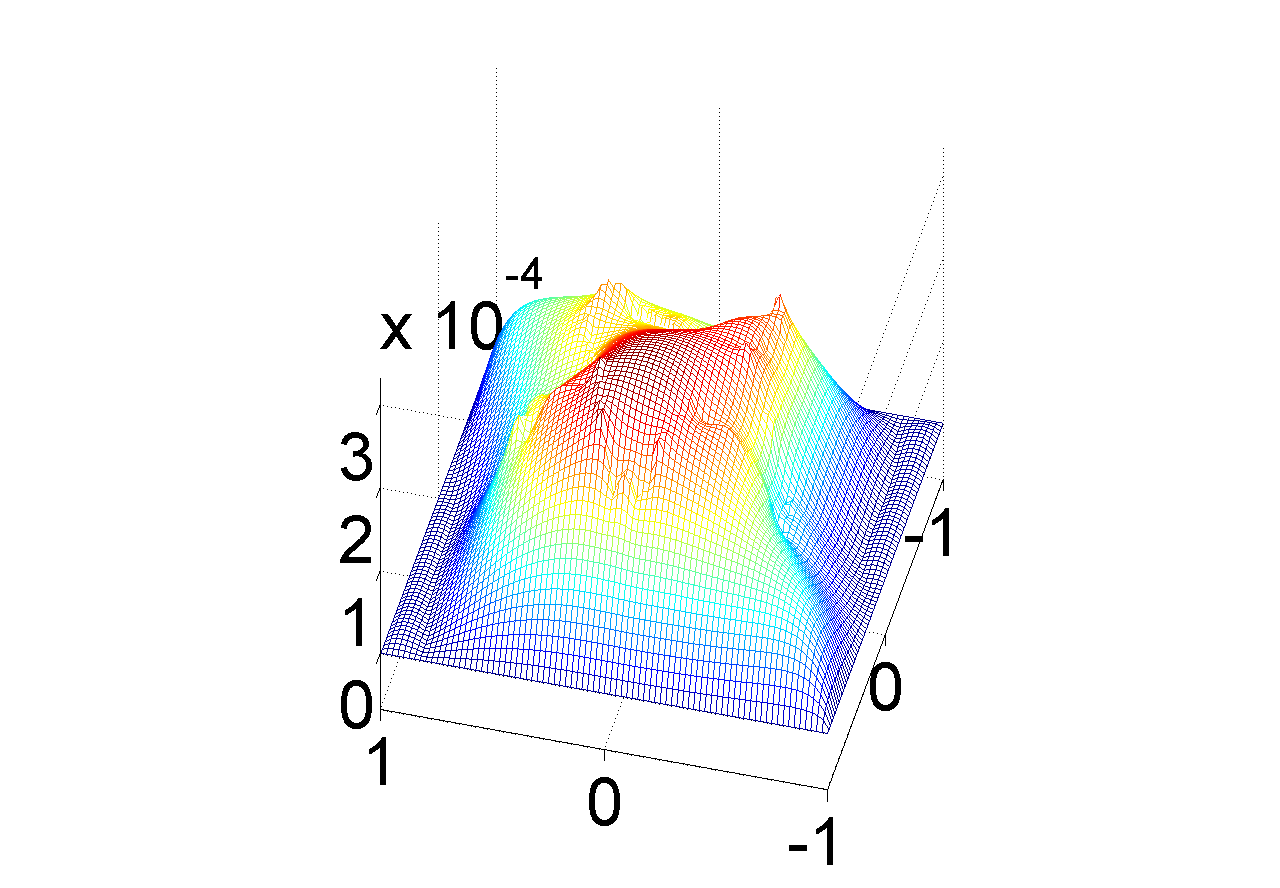}}
\caption{Numerical results for Example 4 on mesh $80\times 80$.}
\label{Fig.lable4}
\end{figure}

\paragraph{Example 5.}
To further test our method for strong discontinuity with complex geometric interface, we consider the exact solution given in Example 1 again, while now change the interface to the benchmark jigsaw-like pattern
$$
\left\{\begin{array}{ll}
x(\theta)=0.6\cos(\theta)-0.3\cos(3\theta), &\ \  \mbox{in}\ \ \Omega^+,\\
y(\theta)=1.5+0.7\sin(\theta)-0.07\sin(3\theta)+0.2\sin(7\theta), &\ \  \mbox{in}\ \ \Omega^-.
\end{array}\right.
$$
The computational domain is set to $\Omega=[-1, 1]\times [0, 3]$.
In this case, the interface geometry is very complex.

\begin{table}[!ht]
\centering
\caption{Numerical error and order for  Example 5.}
\begin{tabular}{lllllllll}
\cline{1-9}
$n_x\times n_y$ &$L_\infty(u_1)$&Order &$L_2(u_1)$&Order & $L_\infty(u_2)$&Order &$L_2(u_2)$ & Order  \\
\hline
$40\times 30$ &$9.07\times 10^{-3}$     &           &$2.53\times 10^{-3}$      &   &$8.42\times 10^{-3}$     &                 &$2.23\times 10^{-3}$      &\\
$80\times 60$ &$2.48\times 10^{-3}$     &1.87       &$6.87\times 10^{-4}$      &1.88  &$2.20\times 10^{-3}$     &1.94          &$6.11\times 10^{-4}$    &1.87\\
$160\times 120$ &$6.04\times 10^{-4}$     &2.04       &$1.71\times 10^{-4}$      &2.01  &$5.40\times 10^{-4}$     &2.03          &$1.60\times 10^{-4}$    &1.93\\
$320\times 240$&$1.58\times 10^{-4}$     &1.93       &$4.20\times 10^{-5}$      &2.03  &$1.30\times 10^{-4}$     &2.05          &$3.69\times 10^{-5}$     &2.12\\
\hline
\end{tabular}
\label{jiasaw_error5}
\end{table}

Table \ref{jiasaw_error5} shows the grid refinement analysis. Figure \ref{Fig.lable5} illustrates our results. The designed order of accuracy and convergence is achieved.

\begin{figure}[!ht]
\centering
\subfigure[Numerical solution $u_1$]{
\label{Fig.sub.51}
\includegraphics[width=0.45\textwidth]{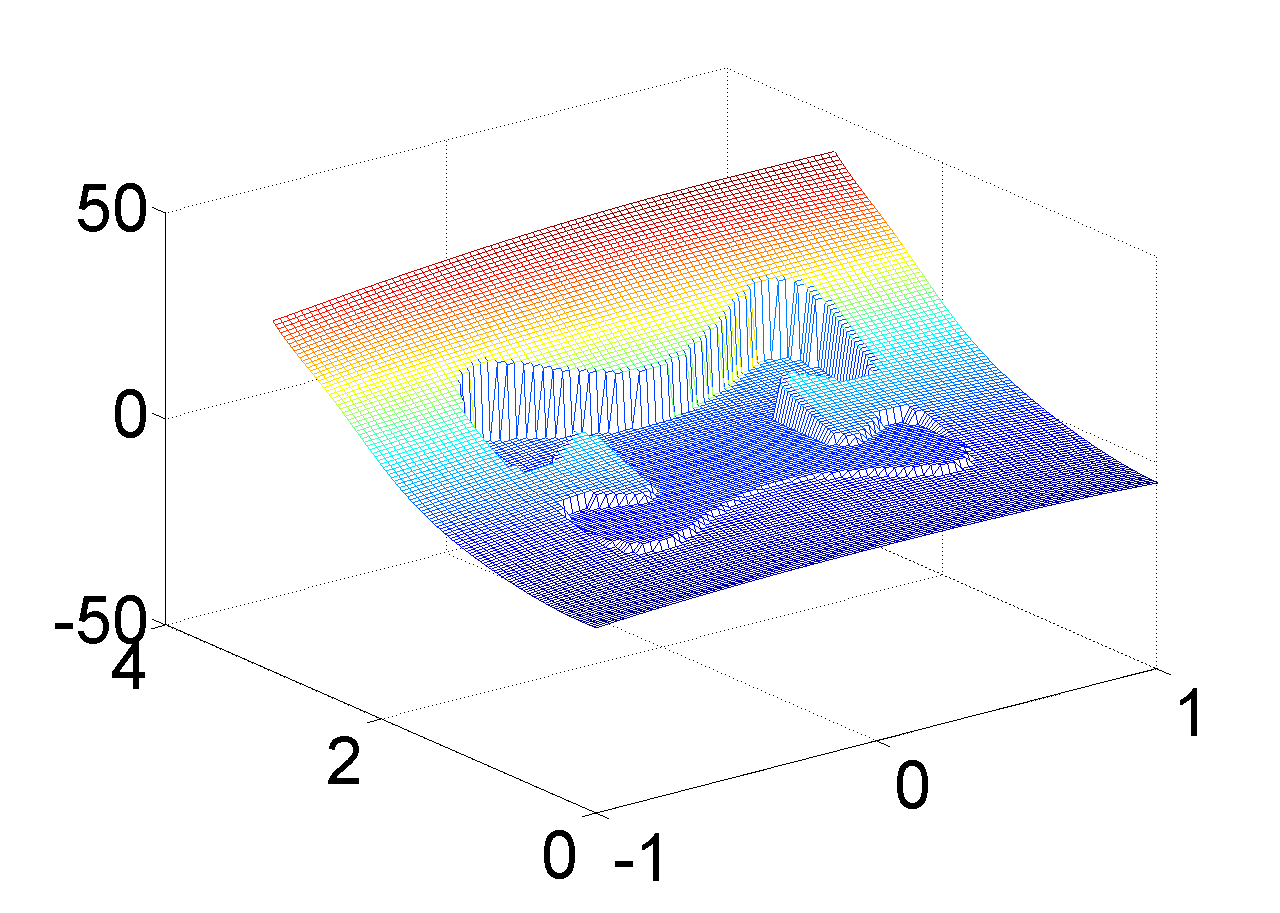}}
\subfigure[Error $u_1$]{
\label{Fig.sub.52}
\includegraphics[width=0.45\textwidth]{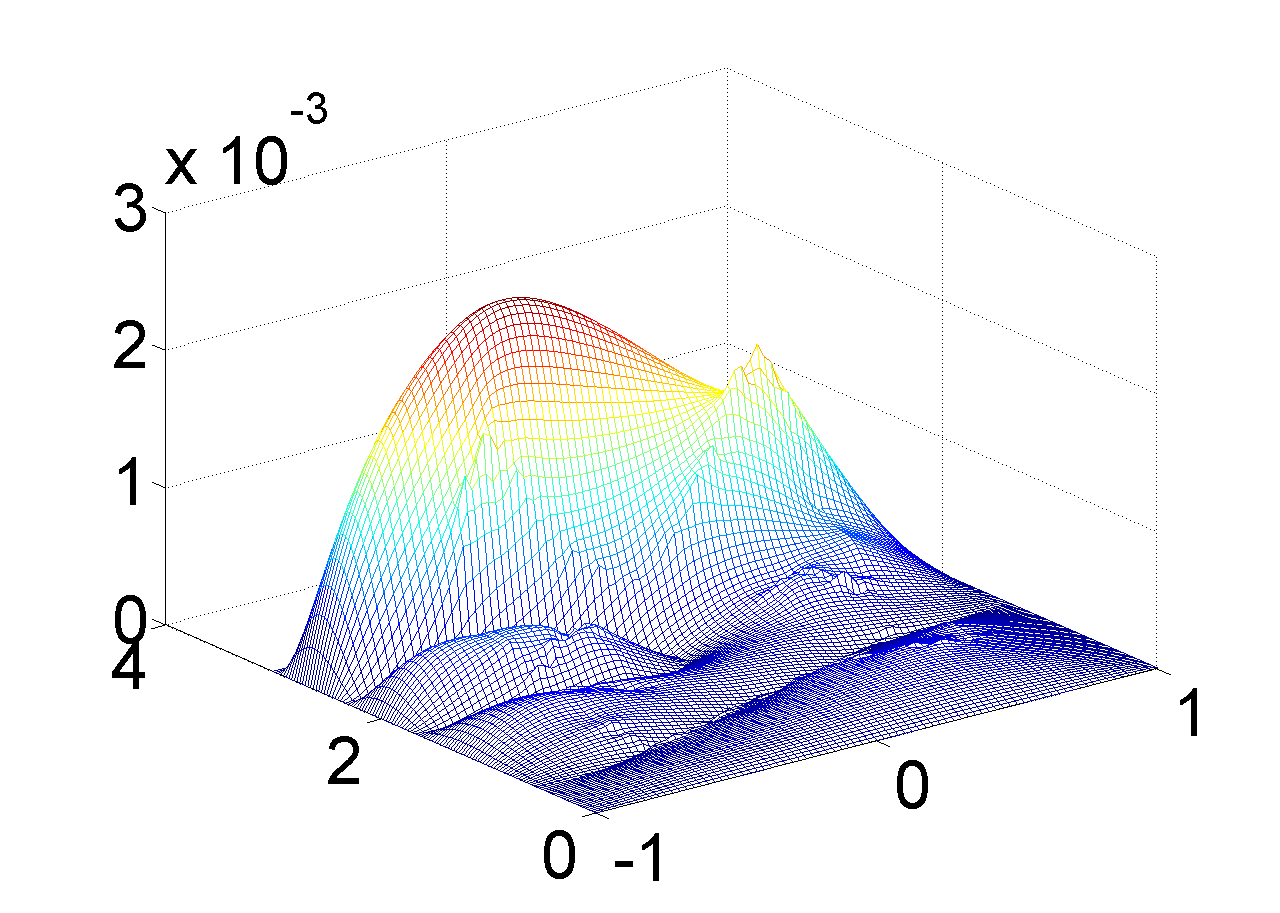}}
\subfigure[Numerical solution $u_2$]{
\label{Fig.sub.53}
\includegraphics[width=0.45\textwidth]{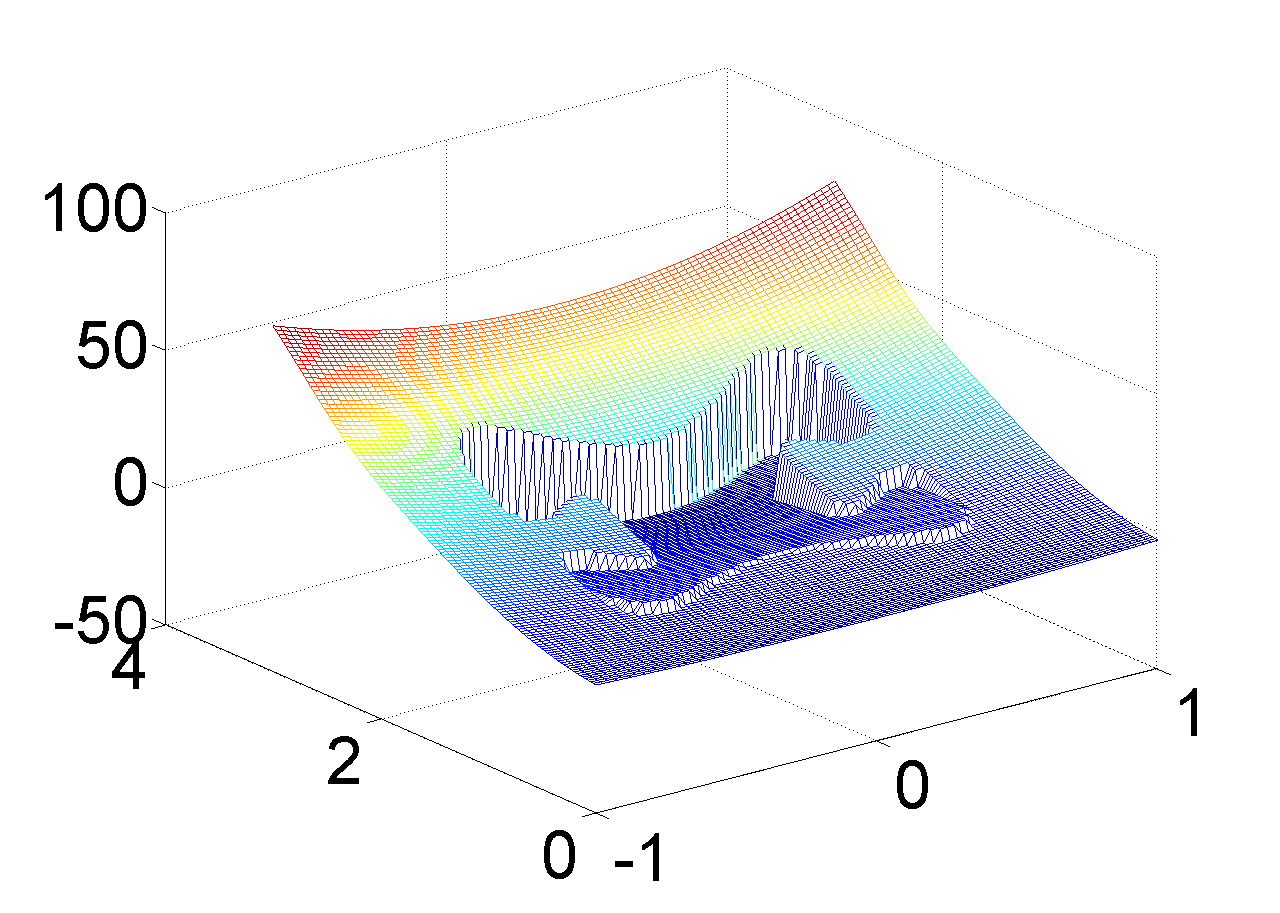}}
\subfigure[Error $u_2$]{
\label{Fig.sub.54}
\includegraphics[width=0.45\textwidth]{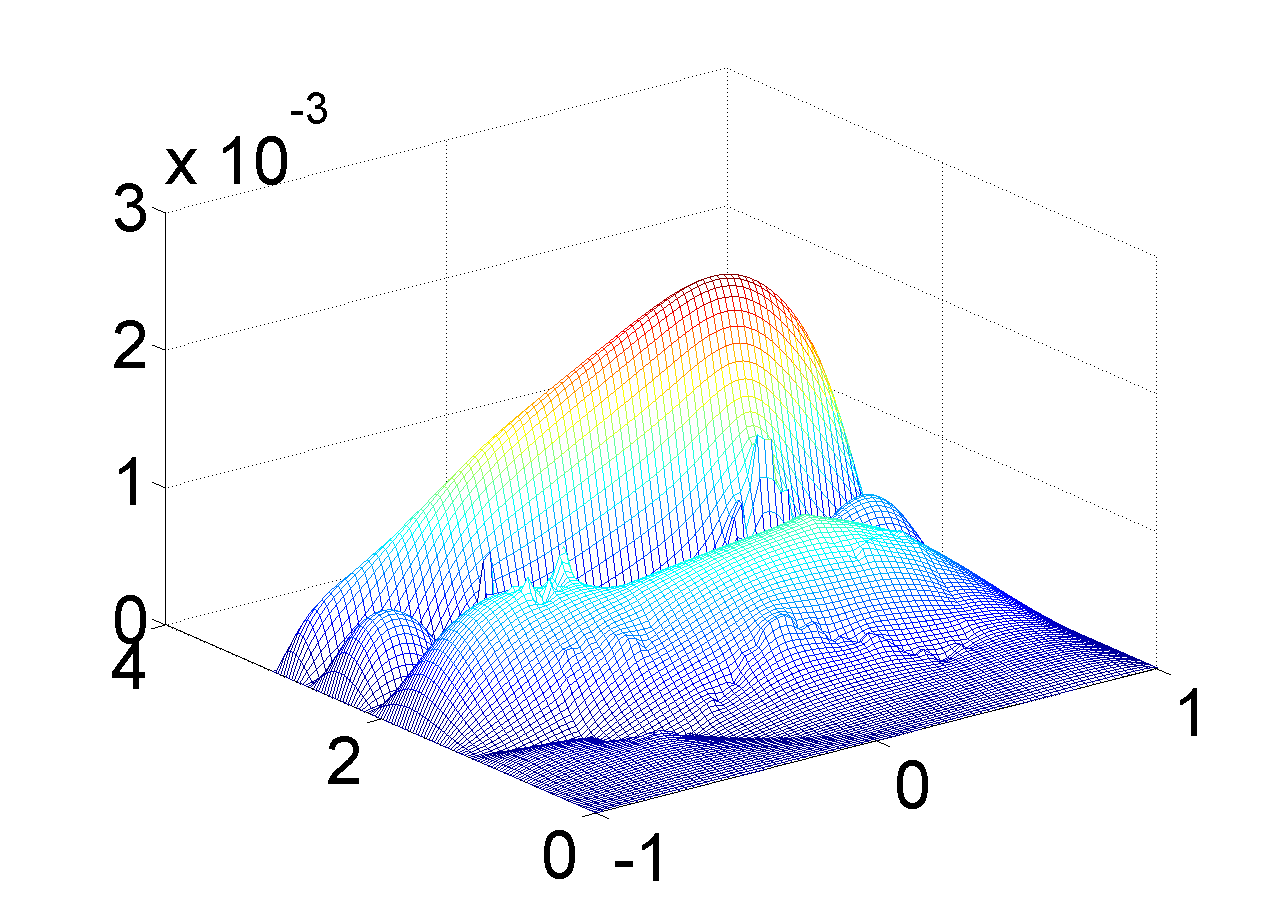}}
\caption{Numerical results for Example 5 on mesh $80\times 120$.}
\label{Fig.lable5}
\end{figure}

\subsection{Inhomogeneous media}
Having validated the MIB method for complex geometry and large contrast in Poisson's ratio and shear modulus, we consider another class of elasticity interface
problems. In many practical elasticity applications, the shear  and bulk moduli may not be constants \cite{Shearer:1999,KLXia:2013d}. Therefore it is important to develop numerical methods for function bulk and shear moduli. In this subsection,  we test our MIB method for handling function bulk and shear moduli.

\paragraph{Example ~6.}
In our first  example, let the domain and interface be the same as those in Example 1.  The exact solution is also designed the same as that  in Example 1.
However, we design the shear modulus to be position dependent function
$$
\mu=
\left\{\begin{array}{ll}
\mu^+=1500000+2000000(x+y), &\ \  \mbox{in}\ \ \Omega^+,\\
\mu^-=2000000+1500000xy, &\ \  \mbox{in}\ \ \Omega^-.
\end{array}\right.
$$
We also set the bulk modulus to
$$
\lambda=
\left\{\begin{array}{ll}
\lambda^+=1000000+4000000(x+y)/3, &\ \  \mbox{in}\ \ \Omega^+,\\
\lambda^-=2000000+1500000xy, &\ \  \mbox{in}\ \ \Omega^-.
\end{array}\right.
$$
The grid refinement analysis is shown in Table \ref{table_var1}. The designed second order accuracy is achieved.

\begin{table}[!ht]
\centering\caption{Numerical error and order for Example 6.}
\begin{tabular}{lllllllll}
\cline{1-9}
$n_x\times n_y$ &$L_\infty(u_1)$&Order &$L_2(u_1)$&Order & $L_\infty(u_2)$&Order &$L_2(u_2)$ & Order  \\
\hline
$20\times 20$ &$5.27\times 10^{-4}$     &           &$2.48\times 10^{-4}$      &   &$2.38\times 10^{-4}$     &                 &$1.17\times 10^{-4}$      &\\
$40\times 40$ &$1.17\times 10^{-4}$     &2.17       &$6.17\times 10^{-5}$      &2.01  &$9.25\times 10^{-5}$     &1.37          &$4.27\times 10^{-5}$    &1.46\\
$80\times 80$ &$2.55\times 10^{-5}$     &2.20       &$1.33\times 10^{-5}$      &2.21  &$2.12\times 10^{-5}$     &2.13          &$9.68\times 10^{-6}$    &2.14\\
$160\times 160$ &$6.23\times 10^{-6}$     &2.03       &$3.26\times 10^{-6}$      &2.03  &$5.06\times 10^{-6}$     &2.07          &$2.38\times 10^{-6}$     &2.03\\
$320\times 320$ &$1.46\times 10^{-6}$     &2.09       &$8.00\times 10^{-7}$      &2.03  &$1.11\times 10^{-6}$     &2.19          &$5.50\times 10^{-7}$     &2.11\\
\hline
\end{tabular}
\label{table_var1}
\end{table}

\paragraph{Example 7.}
In this example, let the domain and interface be the same as those in Example 2. We also adopt the exact solution in Example 2.
We set the shear modulus to be a position dependent function
$$
\mu=
\left\{\begin{array}{ll}
\mu^+=2500000+3000000(x+y), &\ \  \mbox{in}\ \ \Omega^+,\\
\mu^-=3000000+2500000xy, &\ \  \mbox{in}\ \ \Omega^-.
\end{array}\right.
$$
Additionally, we design the following bulk modulus
$$
\lambda=
\left\{\begin{array}{ll}
\lambda^+=5000000+2000000(x+y), &\ \  \mbox{in}\ \ \Omega^+,\\
\lambda^-=3000000+2500000xy, &\ \  \mbox{in}\ \ \Omega^-.
\end{array}\right.
$$

Table \ref{table_var2} presents the grid refinement analysis of this case. Our numerical results are similar to those of Example 2a, which means that the variable shear and bulk moduli do not affect the performance of our method.

\begin{table}[!ht]
\centering
\caption{Numerical error and order for Example 7.}
\begin{tabular}{lllllllll}
\cline{1-9}
$n_x\times n_y$ &$L_\infty(u_1)$&Order &$L_2(u_1)$&Order & $L_\infty(u_2)$&Order &$L_2(u_2)$ & Order  \\
\hline
$20\times 20$ &$4.08\times 10^{-3}$     &           &$1.82\times 10^{-3}$      &      &$1.01\times 10^{-2}$     &              &$3.61\times 10^{-3}$      &\\
$40\times 40$ &$1.01\times 10^{-4}$     &2.01       &$3.25\times 10^{-4}$      &2.49  &$1.97\times 10^{-3}$     &2.36          &$5.54\times 10^{-3}$    &1.45\\
$80\times 80$ &$2.23\times 10^{-4}$     &2.18       &$8.59\times 10^{-5}$      &2.21  &$3.47\times 10^{-4}$     &2.51          &$1.15\times 10^{-4}$    &2.15\\
$160\times 160$ &$5.17\times 10^{-5}$     &2.11       &$2.36\times 10^{-5}$      &1.92  &$8.07\times 10^{-5}$     &2.10          &$2.27\times 10^{-5}$     &2.03\\
$320\times 320$ &$1.40\times 10^{-6}$     &1.88       &$5.26\times 10^{-6}$      &2.17  &$2.81\times 10^{-5}$     &1.52          &$6.63\times 10^{-6}$     &1.78\\
\hline
\end{tabular}
\label{table_var2}
\end{table}

\paragraph{Example 8.}
Finally, we consider another example to validate our method for the combination of complex interface geometry and variable material coefficients.
To this end, we adopt  the domain and interface used  in Example 4.  The exact solution is also designed as that in Example 4. However,
the shear modulus is set to be position dependent
$$
\mu=
\left\{\begin{array}{ll}
\mu^+=2500000+3000000(x+y), &\ \  \mbox{in}\ \ \Omega^+,\\
\mu^-=3000000+2500000xy, &\ \  \mbox{in}\ \ \Omega^-.
\end{array}\right.
$$
The bulk modulus is also a variable function
$$
\lambda=
\left\{\begin{array}{ll}
\lambda^+=5000000+2000000(x+y), &\ \  \mbox{in}\ \ \Omega^+,\\
\lambda^-=3000000+2500000xy, &\ \  \mbox{in}\ \ \Omega^-.
\end{array}\right.
$$

Table \ref{table_var3} gives the grid refinement analysis of this example. We observe the second order accuracy.

\begin{table}[!ht]
\centering
\caption{Numerical error and order for  Example 8.}
\begin{tabular}{lllllllll}
\cline{1-9}
$n_x\times n_y$ &$L_\infty(u_1)$&Order &$L_2(u_1)$&Order & $L_\infty(u_2)$&Order &$L_2(u_2)$ & Order  \\
\hline
$40\times 40$ &$1.69\times 10^{-3}$     &           &$7.18\times 10^{-4}$      &   &$1.80\times 10^{-3}$     &                 &$8.20\times 10^{-4}$      &\\
$80\times 80$ &$3.94\times 10^{-4}$     &2.28       &$1.88\times 10^{-4}$      &1.93  &$3.09\times 10^{-4}$     &2.54          &$1.50\times 10^{-4}$    &2.45\\
$160\times 160$ &$8.45\times 10^{-5}$     &2.22       &$3.58\times 10^{-5}$      &2.39  &$7.13\times 10^{-5}$     &2.16          &$3.49\times 10^{-5}$    &2.09\\
$320\times 320$ &$1.87\times 10^{-5}$     &2.18       &$1.03\times 10^{-5}$      &1.80  &$1.89\times 10^{-5}$     &1.92          &$1.01\times 10^{-5}$     &1.79\\
\hline
\end{tabular}
\label{table_var3}
\end{table}

\section{Conclusion}

In this work,  elasticity interface problems governed by the linear elasticity theory is investigated by the matched interface and boundary (MIB) method for the first time.  Both isotropic homogeneous material and  isotropic inhomogeneous material are considered in the theoretical modeling. In particular, the isotropic inhomogeneous material  is described by a strain-stress constitutive law with a function-type of modulus. We analyze both strongly discontinuous and weakly discontinuous  solutions to the governing elasticity equations. A new MIB scheme is developed for this class of elasticity interface problems.

Unlike elliptic interface problems, the elasticity interface problems involve more governing equations and more complicated  interface jump conditions. As such,  we need to extend the original MIB method designed elliptic interface problems \cite{Yu:2007c,Yu:2007a,Zhao:2004,Zhou:2006c,Zhou:2006d} for multiple equations and interface conditions. Additionally, the MIB method developed in the present study has to take a particular care for cross derivatives in the governing elasticity equations. Such cross derivatives can be very difficult to deal with when the interface geometry is very complex.    To account for these terms and local interface geometry, we have modified the common fictitious definition, designed the new fictitious schemes, and made use of the secondary fictitious values.

Numerous analytical tests are designed to examine the accuracy, investigate the convergence and explore the robustness of the present MIB method.  Four types of complex interface geometries, namely,  circle, ellipse, flower-liked and jigsaw-liked interfaces, are employed in our study. Additionally, both weakly discontinuous and strongly discontinuous solutions are considered in our tests. Moreover, we have also examined our method for both small and large contrasts in Poisson's ratios and shear moduli.   Finally, both constant  material parameters and variable material parameters are utilized in our validation.
An essentially second-order  accuracy in both $L_\infty$ and $L_2$  norms is observed  in all tests.

\section*{Acknowledgments}

This work was supported in part by NSF grants   IIS-1302285 and DMS-1160352,   NIH grant R01GM-090208 and MSU Center for Mathematical Molecular Biosciences Initiative.

\section*{Literature cited}
\renewcommand\refname{}

\bibliographystyle{abbrv}
\bibliography{refs}

\end{document}